\documentclass[12pt,reqno]{amsart}

\usepackage{amsfonts,amsmath,amsthm}
\usepackage{amssymb,epsfig}
\usepackage{enumerate} 
\usepackage[text={425pt,650pt},centering]{geometry}
\usepackage{graphicx}
\usepackage{epsfig}
\usepackage{tikz}
\usepackage{tikz-cd}
\usepackage{caption}
\usepackage{array}
\usepackage{varwidth}
\usepackage{color} %color
\definecolor{vert}{rgb}{0,0.6,0}

\numberwithin{figure}{section}

\theoremstyle{plain}
\newtheorem{thm}{Theorem}[section]

\newtheorem{defn}{Definition}

\newtheorem{ex}{Example}
\newtheorem{lem}[thm]{Lemma}
\newtheorem{cor}[thm]{Corollary}
\newtheorem{prop}[thm]{Proposition}
\theoremstyle{remark}
\newtheorem{rem}{\bf{Remark}}
\numberwithin{equation}{section}

\newcommand{\N}{\mathbb{N}}

\newcommand{\R}{\mathbb{R}}

\newcommand{\gam}{\gamma}

\newcommand{\e}{\varepsilon}

\newcommand{\sig}{\sigma}

\newcommand{\Lam}{\Lambda}

\begin{document}

\title[Optimal convergence rate in the convex infinite-dimensional setting]
{Optimal Convergence Rate for Periodic Homogenization of Rearrangement-Invariant Convex Hamilton--Jacobi Equations in Infinite Dimensions}

\author[Seho Park]
{Seho Park}

% \thanks{
% ABC
% }

\address[Seho Park]
{
Department of Mathematics, 
University of Wisconsin Madison, 480 Lincoln  Drive, Madison, WI 53706, USA}
\email{park646@wisc.edu}

\date{\today}
\keywords{periodic homogenization; infinite-dimensional Hamilton--Jacobi equations; effective Hamiltonian; optimal convergence rate; action metric; rearrangement invariance}
\subjclass[2010]{
35B40, %Asymptotic behavior of solutions, 
37J50, %Action-minimizing orbits and measures
49L25, %Viscosity solutions
35B27, % homogenization
35F21, %HJE
35R15%pde on infinite dimensional
}

\maketitle
\begin{abstract}
We prove the optimal convergence rate $O(\e)$ for periodic homogenization of convex Hamilton--Jacobi equations arising from infinite systems of indistinguishable particles on the torus, under the assumption that the initial data depend only on the mean configuration. This extends the finite-dimensional result \cite{Tran-Yu-optimal}, which is based on the large-time behavior of the Lagrangian action metric and a curve-surgery argument. Here, these tools cannot be applied directly because minimizing curves live in an infinite-dimensional Hilbert space, where local compactness and finite-dimensional topology are unavailable. We overcome this difficulty by cutting the finite-dimensional mean-time projection of a minimizing curve and gluing the lifted pieces in the Hilbert space using the compact quotient induced by periodicity and rearrangement invariance. We conclude with an example showing that this rate is sharp.
\end{abstract}

\section{Introduction}
We begin with a brief review of the homogenization of Hamilton--Jacobi equations, with an emphasis on quantitative convergence results. In finite dimensions, the periodic homogenization of first-order Hamilton--Jacobi equations with oscillatory spatial dependence was initiated by Lions, Papanicolaou, and Varadhan \cite{LPV} and further developed by Evans \cite{Evans-perturb}. For general nonconvex Hamiltonians, the best classical convergence rate is $O(\e^{1/3})$, obtained by Capuzzo-Dolcetta and Ishii \cite{CDI-nonconvex-rate}. In the convex setting, sharper estimates were obtained using optimal control formulas, weak KAM theory and Aubry--Mather theory~\cite{MTY-rate-2dim}, and later through the optimal control framework in~\cite{MTY-rate-2dim} combined with Alexander's theorem~\cite{cooperman}. Nevertheless, these approaches did not establish the optimal $O(\varepsilon)$ convergence rate for general convex Hamiltonians in all dimensions. The general convex case was finally resolved by Tran and Yu \cite{Tran-Yu-optimal}, who extended Burago's convergence estimate for stable norms in metric geometry \cite{Burago} to the homogenization of Hamilton--Jacobi equations. Their proof uses the large-time asymptotics of the Lagrangian action metric, obtained through subadditivity and superadditivity estimates derived from a curve-cutting argument. This metric viewpoint has since been adapted to multiscale and space-time periodic Hamiltonians~\cite{Han-Jang-multiscale-2023, Nguyen-rate-2024}, to state-constraint, Neumann, and Dirichlet problems on perforated domains~\cite{Han-Jing-Mitake-Tran-state-constraint, Mitake-Ni-Neumann, Han-Tu-Dirichlet}, and to Hamiltonians that are periodic in the unknown~\cite{Mitake-Panrui-Tran-2025}. Related quantitative extensions to nonlinear weakly coupled systems were obtained by Mitake and Ni~\cite{Mitake-Ni-weakly-coupled}.
 
In infinite dimensions, the well-posedness theory for viscosity solutions of the first-order Hamilton--Jacobi equations was developed in the foundational works of Crandall and Lions \cite{C-L-1, C-L-2} and Ishii \cite{Ishii}. Infinite-dimensional Hamilton--Jacobi equations have also been studied from the viewpoint of optimal control, where dynamic programming principles and value-function representations provide Lagrangian action formulas in Hilbert spaces~\cite{barbu,barbu1983existence,barbu1985,cannarsa,Gomes-Nurbekyan-CalculusofVariation}. In the setting of infinite systems of indistinguishable particles, Gomes and Nurbekyan studied the cell problem for convex Hamiltonians \cite{Gomes-Nurbekyan-weakKAM}, building on the infinite-dimensional weak KAM framework developed by Gangbo and Tudorascu \cite{Gangbo-Tudorasco-1, Gangbo-Tudorasco-2}. 
Related qualitative homogenization results for action functionals on Wasserstein space were obtained by Gangbo and Tudorascu~\cite{Gangbo-Tudorasco-2012-homogenization}.
However, quantitative convergence results for infinite-dimensional homogenization remain limited. More recently, Park~\cite{park2026} proved qualitative homogenization and obtained an $O(\e^{1/3})$ convergence rate for possibly nonconvex Hamiltonians in this rearrangement-invariant infinite-dimensional setting with initial data depending only on the mean configuration.

Very recently, Ding, Ekren, Han, and Zitridis~\cite{Ding-Ekren-Han-Zitridis} established quantitative homogenization directly on the Wasserstein space \(\mathcal P_2(\mathbb R^d)\). A major difficulty in their setting is that $\mathcal P_2(\mathbb R^d)$ is not locally compact, in contrast to the compact space $\mathcal P_2(\mathbb T^d)$. They obtain an $O(\sqrt{\varepsilon})$ rate for general multiscale Hamiltonians and the sharp $O(\varepsilon)$ rate when the Hamiltonian depends only on the fast variable and the momentum. Their terminal cost may depend on the entire terminal distribution rather than only on its mean. In the sharp-rate regime, for fixed initial and terminal distributions, the running cost can be expressed as the infimum, over all couplings of these distributions, of the expected finite-dimensional minimal action. This allows the finite-dimensional metric estimate of~\cite{Tran-Yu-optimal} to be applied pointwise, integrated with respect to each coupling, and then minimized over all couplings.

The present paper addresses a complementary infinite-dimensional difficulty to obtain the optimal $O(\e)$ rate under convexity. Our Hamiltonian may be a general rearrangement-invariant functional of the fast variable and momentum and need not arise by integrating a particlewise Hamiltonian. Consequently, the corresponding Lagrangian action cannot in general be reduced to finite-dimensional costs. Instead, the mean-dependent initial-data assumption identifies a finite-dimensional macroscopic variable while the minimizing curves themselves remain in the full Hilbert space. We introduce a mean-endpoint action metric and develop a hybrid curve-surgery argument: the curve is cut through its finite-dimensional mean-time projection, while the resulting pieces are transformed and glued in the full Hilbert space using the compact quotient induced by periodicity and rearrangement invariance. This underlying principle may provide a useful template for other infinite-dimensional variational problems. Thus, \cite{Ding-Ekren-Han-Zitridis} and the present paper advance quantitative homogenization in complementary infinite-dimensional settings and overcome the lack of compactness by different mechanisms.

\subsection{Setting and Motivation}
Following~\cite{Gomes-Nurbekyan-weakKAM}, we recall the mechanical system for a continuum of indistinguishable particles on the \(d\)-dimensional torus. Let $I=[0,1]^d$ be equipped with the Lebesgue measure \(\lambda_0\). The particle labels are parametrized by points \(i\in I\), and a configuration is represented by a map
\[
    x\in V:=L^2(I;\mathbb R^d).
\]
For \(i\in I\), the value \(x(i)\in\mathbb R^d\) represents the position of the particle labeled by \(i\). 
We consider a Hamiltonian
\[
    H=H(x,p):V\times V\to\mathbb R.
\]
For the momentum variable, we identify \(V^*\) with \(V\) since $V$ is a Hilbert space.

The physical structure of the problem imposes two symmetries on \(H\). First, since the particles live on the torus, the Hamiltonian is periodic in the configuration variable. More precisely, if
\[
    \Lambda:=L^2(I;\mathbb Z^d),
\]
then $x \mapsto H(x,p)$ is $\Lam$-periodic.

Also, since the particles are indistinguishable, the Hamiltonian is invariant under rearrangements of the particles. These rearrangements are described by the group of measure-preserving bijections of \(I\), namely
\[
    \mathcal G
    :=
    \left\{
    g:I\to I:
    g \text{ is bijective},\
    g,g^{-1}\text{ are Borel measurable},\
    g_{\#}\lambda_0=(g^{-1})_{\#}\lambda_0=\lambda_0
    \right\}.
\]
For \(g\in\mathcal G\), the configuration \(x\circ g\) represents the same particle configuration as \(x\), up to relabeling.

We next consider the periodic homogenization problem for the Hamilton--Jacobi equation on \(V\). For each \(\varepsilon>0\), let \(u^\varepsilon\) be the viscosity solution of
\begin{equation*}
\mathrm{(CP)_\e} \quad\begin{cases}
    u^\varepsilon_t
    +
    H\left(\dfrac{x}{\varepsilon},Du^\varepsilon\right)=0
        &\text{in } V\times(0,\infty),\\[0.8em]
    u^\varepsilon(x,0)=u_0(x)
        &\text{on } V.
\end{cases}
\end{equation*}
Here, $H$ satisfies the following assumptions. 
\begin{align*}
    &(\mathrm{H1})\quad H(x + z, p) = H(x,p) &&\text{(Periodicity)} \\
    &(\mathrm{H2})\quad H(x \circ g, p\circ g) = H(x,p) &&\text{(Rearrangement Invariance)} \\
    &(\mathrm{H3})
    \begin{cases}
        |H(x,p) - H(y,p)| \le L_x(1+ \|p\|)\|x-y\|\\
        |H(x,p) - H(x,q)| \le L_R \|p-q\|, \quad p,q \in B(0,R)
    \end{cases} &&\text{(Locally Lipschitz)}\\
    &(\mathrm{H4})\quad \lim_{\|p\| \to \infty} \inf_{x \in V} H(x,p) = +\infty &&\text{(Coercivity)}
\end{align*}
Conditions \rm{(H1)}--\rm{(H2)} hold for all $x,p\in V$, $z\in\Lambda$, and $g\in G$. In \rm{(H3)}, there exists $L_x>0$ such that the first estimate holds for all $x,y,p\in V$, and, for every $R>0$, there exists $L_R>0$ such that the second estimate holds for all $x\in V$ and $p,q\in B_V(0,R)$.

Let $\chi_I\in V$ denote the constant function equal to $1$ on $I$. For $x\in V$, define its mean and its projection onto the constant configurations (its mean configuration) by
\[
    \mathfrak m(x):=\int_I x\,d\lambda_0\in\mathbb R^d,
    \qquad
    Mx:=\mathfrak m(x)\chi_I\in V.
\]
For the initial data, we assume:
\begin{align*}
(\mathrm{I1})\quad & u_0(x) = u_0(Mx) \\
(\mathrm{I2})\quad & u_0 \in C_b^1(V)
\end{align*}
Note that the initial data depend only on the mean configuration.

Under these assumptions, $u^\e\to u$ locally uniformly as $\e\to0$. Moreover, the homogenized limit depends only on the mean configuration: 
\[
    u(x,t)=\widetilde u(\mathfrak m(x),t).
\]
Moreover, the finite-dimensional limit $\widetilde u: \R^d \times [0,\infty) \to\R$ solves an effective Hamilton--Jacobi equation: 
\[
{\rm(\overline{CP})} \quad
\begin{cases}
\widetilde u_t + \overline{H}(D \widetilde u) = 0
& \text{in } \R^d \times (0,\infty), \\[0.8ex]
\widetilde u(x,0)= \widetilde u_0(x)
& \text{on } \R^d
\end{cases}
\]
where $u_0 = \widetilde u_0 \circ \mathfrak m$.
See \cite{park2026} for details. Thus, although the oscillatory problem \((\mathrm{CP}_\varepsilon)\) is posed on the infinite-dimensional space \(V\), the effective dynamics are finite-dimensional. Here, the effective Hamiltonian in the mean direction is given by the cell problem
\[
    H(y,p\chi_I+Dv(y))=\overline H(p)
    \qquad\text{in } V.
\]

Quantitatively, in the general nonconvex setting, the known convergence rate is $O(\e^{1/3})$:
\[
    \sup_{x\in V,\ 0\le t\le T}
    |u^\varepsilon(x,t)-\widetilde u(\mathfrak m(x),t)|
    \le
    C_T\varepsilon^{1/3}.
\]
This rate coincides with the general finite-dimensional periodic homogenization rate for nonconvex Hamiltonians \cite{CDI-nonconvex-rate}.

\subsection{Main result}
The goal of this paper is to establish the optimal $O(\varepsilon)$ convergence rate in the present infinite-dimensional setting, under the additional assumption that the Hamiltonian is convex in the momentum variable. To this end, we further assume:

\begin{align*}
(\mathrm{H5}) \quad \text{for } x\in V, \; p \mapsto H(x,p) \quad \text{is convex}.
\end{align*}

Here is our main result.
\begin{thm}
    Assume \emph{(H1)-(H5)} and \emph{(I1)-(I2)}. For $\e > 0$, let $u^\e$ be the viscosity solution to $\mathrm{(CP)_\e}$. Let $\widetilde u$ be the viscosity solution to $\mathrm{(\overline{CP})}$. Then there exists $C > 0$ depending only on $H$ and $\|Du_0\|_{L^\infty(V)}$ such that
    \begin{equation*}
        \|u^\e(x,t) - \widetilde u(\mathfrak m(x),t)\|_{L^\infty(V \times [0,\infty))} \le C\e.
    \end{equation*}
\end{thm}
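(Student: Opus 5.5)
We follow the metric strategy of \cite{Tran-Yu-optimal}, adapted to the mean variable. Let $L(x,v)=\sup_p\{p\cdot v-H(x,p)\}$ be the Legendre transform on $V$; by (H3)--(H5), $L$ is convex, superlinear in the relevant range, $\Lambda$-periodic and $\mathcal G$-invariant. Since $\|Du_0\|_{L^\infty}$ is finite, only momenta in a ball $B(0,R)$ matter. We may therefore modify $H$ outside a large ball so that $L$ is finite, locally bounded and coercive. Because $u_0(x)=\widetilde u_0(\mathfrak m(x))$, the optimal control formula gives
\[
u^\e(x,t)=\inf_{y\in\R^d}\Big\{\widetilde u_0(y)+\e\, m\big(\tfrac{x}{\e},y/\e;\tfrac t\e\big)\Big\},
\]
where the \emph{mean-endpoint action metric} is
\[
m(x,y;t)=\inf\Big\{\int_0^t L(\gamma,\dot\gamma)\,ds:\ \gamma(0)=x,\ \mathfrak m(\gamma(t))=y\Big\}.
\]
The effective formula is $\widetilde u(\mathfrak m(x),t)=\inf_y\{\widetilde u_0(y)+t\overline L((\mathfrak m(x)-y)/t)\}$. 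The theorem therefore reduces to a uniform estimate
\[
|m(x,y;t)-t\overline L((\mathfrak m(x)-y)/t)|\le C
\]
for $t\ge1$ and $|\mathfrak m(x)-y|\le Rt$, where velocities are restricted to a compact set by the gradient bound. Scaling by $\e$ then gives $C\e$. The regime $t\le\e$ is handled separately by Lipschitz bounds.

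\textbf{Step 1: reduction to the mean.} By (H1)--(H2), $m(x,y;t)$ changes by at most a bounded amount when $x$ is replaced by any other point with the same mean. We prove this using the compact quotient $V/(\Lambda\rtimes\mathcal G)$, which at fixed mean is essentially $\mathcal P(\mathbb T^d)$ with bounded $W_2$-diameter. Any two configurations with the same mean, modulo $\Lambda$ and rearrangement, can be joined in unit time by a curve of bounded speed whose mean is kept fixed, at uniformly bounded cost. Write $\widetilde m(a,y;t)$ for the resulting function of the means alone, well defined up to an $O(1)$ error. It is $\mathbb Z^d$-periodic in the pair $(a,y)$, since shifting by $k\chi_I\in\Lambda$ shifts the mean by $k$.

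\textbf{Step 2: subadditivity and the lower bound.} Concatenation, combined with Step 1 to glue at matching means, gives $\widetilde m(a,b;t+s)\le \widetilde m(a,c;t)+\widetilde m(c,b;s)+C$. A Fekete-type argument shows that $\widetilde m(a,b;t)/t$ converges to a convex function of $(a-b)/t$. This function must equal $\overline L$, because $\widetilde u$ is identified in \cite{park2026}. The lower bound $m\ge t\overline L(\cdot)-C$ follows directly from the cell problem: take an approximate corrector $v$ with $H(y,p\chi_I+Dv)\le\overline H(p)+\delta$. By periodicity and the reduction to bounded velocities, $v$ can be chosen bounded. Integrating $p\cdot\dot{\mathfrak m}+\frac{d}{ds}v(\gamma)\le L+\overline H(p)+\delta$ along any admissible curve gives $m\ge t(p\cdot q-\overline H(p))-2\|v\|_\infty-\delta t$. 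We then optimize over $p$, letting $\delta\to0$ suitably, or we use a Lipschitz viscosity corrector if one exists.

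\textbf{Step 3: superadditivity, the main obstacle.} The upper bound $m\le t\overline L+C$ requires Burago's curve-surgery argument. Take a near-minimizer $\gamma$ for $m(x,y;Nt)$. We cut it according to its finite-dimensional mean-time projection $s\mapsto(\mathfrak m(\gamma(s)),s)\in\R^{d+1}$, choosing times $s_i$ at which the projection nearly crosses integer translates of a fixed segment. We then rearrange the pieces, as in the finite-dimensional proof, so that the mean path becomes approximately a union of periodic copies. Each piece is translated by an element $k_i\chi_I\in\Lambda$ and relabelled by some $g_i\in\mathcal G$. The endpoints of consecutive pieces are then glued in $V$ with $O(1)$-cost connectors from Step 1. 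The crucial point is to keep the number of gluings bounded independently of $N$. Finite dimensionality of the projection supplies this bound, via the combinatorial lemma of \cite{Burago,Tran-Yu-optimal} on $\R^{d+1}$. This yields $N\,\widetilde m(a,b;t)\ge \widetilde m(Na,Nb;Nt)-C$, uniformly in $N$. Letting $N\to\infty$ and using Step 2 gives $\widetilde m\le t\overline L+C$. Inserting both bounds into the two control formulas finishes the proof.
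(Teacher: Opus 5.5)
Your Step~1 (reduction to the mean) and your corrector-based lower bound $m\ge t\overline L-C$ in Step~2 are sound in substance. The corrector argument is in fact a more direct route to that half than the paper's, which obtains it from almost-subadditivity and Fekete. Do not differentiate an approximate corrector in $V$; make it rigorous with the bounded viscosity corrector $v_p$ of \cite{park2026}. The optimal-control formula for the solution $v_p(x)-t\overline H(p)$ of the tilted equation gives $v_p(\gamma(t))-v_p(\gamma(0))\le\int_0^t\bigl(L(\gamma,\dot\gamma)-p\cdot\tfrac{d}{ds}\mathfrak m(\gamma)+\overline H(p)\bigr)\,ds$ along every curve.

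The genuine gap is Step~3, which is the heart of the theorem. The inequality you arrive at, $N\widetilde m(a,b;t)\ge \widetilde m(Na,Nb;Nt)-C$, is a concatenation (subadditivity) inequality. Dividing by $N$ and letting $N\to\infty$ gives $\widetilde m\ge t\overline L$, which is the bound you already have, not $\widetilde m\le t\overline L+C$. Cutting a near-minimizer of the long problem can only give the reverse inequality $N\widetilde m(a,b;t)\le \widetilde m(Na,Nb;Nt)+\text{error}$. Even for that, you never do the essential bookkeeping: you must show that the glued curve(s) of duration $t$ have the right mean displacement (up to $O(1)$) and together carry no more than the action of $\gamma$. ``Crossing integer translates of a fixed segment'' and ``rearranging the mean path into periodic copies'' is not a defined procedure and guarantees neither. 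Also, the loss is $O(1)$ per produced curve of duration $t$, so it is $O(N)$ before dividing by $N$; it cannot be made independent of $N$ by bounding the number of gluings.

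The missing idea is the dyadic halving used in the paper. Take a near-minimizer $\gamma$ of $m(2t,2a,2q)$ and apply Lemma~\ref{lem: burago} to $\xi(s)=(\mathfrak m(\gamma(s)),s)\in\R^{d+1}$. This gives at most $(d+2)/2$ selected intervals, hence at most $(d+4)/2$ complementary ones, and each family carries mean-time displacement exactly $(q-a,t)$. Turn each family into one admissible curve for $m(t,a,q)$ at $O(1)$ cost:
\begin{itemize}
\item compress a unit subinterval of a piece of length $\ge 2$ to save time;
\item translate each piece by $\Lambda$ and relabel it by $\mathcal G$ so that consecutive endpoints are within $D$;
\item insert straight connectors between them;
\item remove the terminal mean error (at most $kD$) with a final connector in the $\chi_I$ direction.
\end{itemize}
Since the two families partition $[0,2t]$, their actions sum to that of $\gamma$, which yields $2m(t,a,q)\le m(2t,2a,2q)+C_R$. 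The uniform constant then comes from telescoping, not from a bounded number of gluings. With $A_n:=2^{-n}m(2^nt,2^na,2^nq)$ one has $|A_{n+1}-A_n|\le 2^{-n-1}C_R$, a geometric series. (A one-shot $N$-fold version is also possible, for instance via necklace splitting with the action included as an extra coordinate, so that some family has boundedly many pieces and exactly a $1/N$ share of the action. The resulting inequality is then $N m(t,a,q)\le m(Nt,Na,Nq)+CN$, not one with an $N$-independent constant.)
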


The remainder of the paper is organized as follows. In Section~\ref{section2}, we reduce the quantitative homogenization problem to the large-scale behavior of a mean-endpoint action metric and outline the curve-surgery argument. In Section~\ref{section3}, we establish the almost-subadditivity and almost-superadditivity estimates. Section~\ref{section4} constructs the homogenized metric, establishes a quantitative error bound, and identifies the metric with the effective Lagrangian. Finally, in Section~\ref{section5}, we combine these results with the variational representations of the oscillatory and effective solutions to prove Theorem~1.1 and discuss the optimality of the rate.

\section{Sketch of the proof}\label{section2}

In this section, we first recall the qualitative homogenization results and explain how the optimal convergence problem reduces to a metric problem with mean-endpoint constraints. We then introduce the homogenized metric arising from almost-subadditivity and almost-superadditivity estimates, and show how this yields the optimal convergence rate. The key estimates are obtained through infinite-dimensional curve-surgery arguments.

\subsection{Reduction to the mean-endpoint metric}
We first recall the uniform estimates obtained from the qualitative homogenization theory in \cite{park2026}. These estimates are the starting point for the optimal-rate argument: they allow us to replace the full configuration \(x\in V\) by its mean configuration \(Mx\), with only an \(O(\varepsilon)\) error. See Propositions~2.5 and 2.8 in~\cite{park2026} for details.

\begin{lem}[Uniform Lipschitz estimate and mean reduction] There exists \(C>0\), independent of \(\varepsilon\), such that 
\begin{equation*} 
|u^\varepsilon(x,t)-u^\varepsilon(y,s)| \le C(\|x-y\|_{L^2}+|t-s|) 
\end{equation*} 
for all \(x,y\in V\) and \(s,t\ge0\). Moreover, 
\begin{equation*} 
|u^\varepsilon(x,t)-u^\varepsilon(Mx,t)| \le C\varepsilon 
\end{equation*} 
for all \((x,t)\in V\times[0,\infty)\). The homogenized limit \(u\) satisfies
\begin{equation*} 
u(x,t)=u(Mx,t). 
\end{equation*} 
\end{lem}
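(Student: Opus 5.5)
We prove the three assertions in order, using only the comparison principle for $(\mathrm{CP})_\e$ and the two symmetries (H1)--(H2), together with (I1)--(I2).

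\emph{Lipschitz in time.} Set $C_0:=\sup_{x\in V}|H(x/\e,Du_0(x))|$. This is finite and independent of $\e$: $Du_0$ is bounded by (I2), and (H3) combined with periodicity gives $\sup_x |H(x,p)|<\infty$ for each bounded set of $p$. Indeed, write $H(x,p)$ in terms of $H(0,p)$; the unbounded factor $\|x\|$ is removed because for any $x$ there is $z\in\Lambda$ with $\|x-z\|\le \sqrt d$, so (H1) lets us replace $x$ by $x-z$. Then $u_0\pm C_0t$ are super- and subsolutions, and comparison gives $|u^\e(x,t)-u_0(x)|\le C_0t$. The equation is autonomous in $t$, so $u^\e(\cdot,\cdot+h)$ is again a solution. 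Comparing it with $u^\e$ yields $\sup_x|u^\e(x,t+h)-u^\e(x,t)|\le C_0h$.

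\emph{Lipschitz in space.} Comparison alone does not control the spatial Lipschitz constant, since translating $x$ changes $x/\e$. We therefore use the optimal control formula
\[
u^\e(x,t)=\inf\Bigl\{u_0(\gamma(0))+\int_0^t L(\gamma/\e,\dot\gamma)\,ds:\ \gamma(t)=x\Bigr\},
\]
with $L$ the Legendre transform in $V$; this is available by (H5), and in any case the paper cites \cite{park2026} for it. From $|u^\e_t|\le C_0$ and coercivity (H4), every viscosity solution satisfies $H(x/\e,Du^\e)\le C_0$ in the viscosity sense, so $\|Du^\e\|\le R_0$ with $R_0$ independent of $\e$. In infinite dimensions the step from a subsolution inequality to a Lipschitz bound needs care; we argue via sup-convolution or the doubling-variables argument of Crandall--Lions. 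Since the lemma is quoted from Propositions 2.5 and 2.8 of \cite{park2026}, we follow that proof. Combining the space and time bounds gives the first estimate.

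\emph{Mean reduction.} This is the key step. Fix $x$ and put $y=x-Mx$, which has mean zero. We want a $z\in\Lambda$ and a $g\in\mathcal G$ such that $\|x\circ g - z - Mx\|$ is small; the natural scale is $\e$. We work at scale $\e$ through the rescaling: the symmetries of $H(\cdot/\e,\cdot)$ are translations by $\e\Lambda$ and rearrangements. Using the uniqueness of the solution together with (H1), (H2) and (I1), we get
\[
u^\e(x\circ g+\e z,t)=u^\e(x,t).
\]
Here the initial datum is unchanged because $\mathfrak m(x\circ g)=\mathfrak m(x)$, while translation by $\e z$ shifts the mean by $\e\,\mathfrak m(z)$. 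For that reason we choose $z$ with $\mathfrak m(z)=0$, or else absorb the shift using $|u_0(a)-u_0(b)|\le \|Du_0\|_\infty|\mathfrak m(a)-\mathfrak m(b)|$ and comparison.

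It remains to pick $z$ with $\|x+\e z - Mx\|\le C\e$. We use no rearrangement for this part: reduce coordinatewise $x(i)-\mathfrak m(x)$ modulo $\e\mathbb Z^d$ into $[0,\e)^d$. This is a measurable choice with $|x(i)+\e z(i)-\mathfrak m(x)|\le\sqrt d\,\e$. The mean of $\e z$ is then within $\sqrt d\,\e$ of $0$, which changes the initial value by at most $\|Du_0\|\sqrt d\,\e$; by comparison this controls the solutions uniformly in time. Combining with the spatial Lipschitz bound, we get $|u^\e(x,t)-u^\e(Mx,t)|\le C\e$. Letting $\e\to0$ gives $u(x,t)=u(Mx,t)$.

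The main obstacle is the uniform spatial Lipschitz bound in infinite dimensions. It requires either a clean control representation or a Crandall--Lions type argument, and we follow \cite{park2026} there.
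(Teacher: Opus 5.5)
The paper gives no argument for this lemma; it imports it from Propositions~2.5 and~2.8 of \cite{park2026}. Your reconstruction is correct in outline and makes explicit what that citation hides. The time bound is complete: you compare with $u_0\pm C_0t$, then use time-translation invariance and the $L^\infty$-contraction given by comparison. The mean reduction is also complete once the spatial Lipschitz bound is granted, and it is the instructive part. By (H1), $u^\e(\cdot+\e z,t)$ solves the same equation with initial mean shifted by $\e\,\mathfrak m(z)$. Rounding $x(i)-\mathfrak m(x)$ into $[0,\e)^d$ produces $z\in\Lambda$ with both $\|x+\e z-Mx\|_{L^2}$ and $\e|\mathfrak m(z)|=|\mathfrak m(x+\e z-Mx)|$ at most $\sqrt d\,\e$. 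So only per-particle lattice translations are needed here, not rearrangement invariance or compactness of $SS^d$. The step you leave to \cite{park2026}, exactly as the paper does, is the $\e$-uniform spatial Lipschitz bound. Your reduction for it is the right one. At any point with $t>0$ where a test function touches $u^\e$ from above, $\phi_t\ge -C_0$, hence $H(x/\e,D\phi)\le C_0$. Then $\|D\phi\|\le R_0$ by (H4), with $R_0$ independent of $\e$ because the infimum in (H4) is over all $x$. Turning this viscosity gradient bound into a Lipschitz bound on $V$ still requires a doubling argument. Since $V$ is not locally compact, the maxima there must be produced by a perturbed optimization principle (e.g.\ Stegall's). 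Finally, drop the appeal to the optimal-control formula. In this paper that representation is justified only after Lemma~\ref{lem:quadratic-reduction} has modified $H$ using the very constant $C$ you are proving, so invoking it here would be circular. Your actual argument never uses it, nor (H5), which is consistent with the lemma coming from the nonconvex theory of \cite{park2026}.
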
\label{lemma:equi-Lip}

Lemma~\ref{lemma:equi-Lip} implies that only the values of \(H\) on a bounded set of momenta are relevant. Therefore, after modifying \(H\) outside a sufficiently large ball in the momentum variable if necessary, we may assume without loss of generality that \(H\) has quadratic growth in \(p\) as stated in the following lemma.

\begin{lem}
\label{lem:quadratic-reduction}
Let $C$ be the spatial Lipschitz constant in Lemma~\ref{lemma:equi-Lip}. There exists a Hamiltonian $\widetilde H:V\times V\to\mathbb R$ satisfying {\rm (H1)--(H5)} such that
\[
\widetilde H(x,p)=H(x,p)
\qquad\text{whenever }\, \|p\|_{L^2}\leq C,
\]
and, for some $K_0>1$,
\begin{equation}\label{eq: quadratic growth}
\frac12\|p\|_{L^2}^2-K_0
\leq \widetilde H(x,p)
\leq \frac12\|p\|_{L^2}^2+K_0
\qquad\text{for all }(x,p)\in V\times V.
\end{equation}
Moreover, replacing $H$ by $\widetilde H$ does not change the viscosity solutions $u^\varepsilon$ and their homogenized limit.
\end{lem}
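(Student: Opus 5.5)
The plan is to build $\widetilde H$ by gluing $H$ to a quadratic function of $p$ outside a large ball, in a way that keeps convexity and both symmetries, and then to use the uniform Lipschitz bound of Lemma~\ref{lemma:equi-Lip} to see that the solutions do not change.

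\textbf{Step 1: bounds on $H$.} Fix $R>C$ and put
\[
M_R:=\sup\{|H(x,p)|:\ x\in V,\ \|p\|\le R\}.
\]
This is finite: (H3) gives $|H(x,p)-H(x,0)|\le L_R R$, and (H1)–(H3) make $x\mapsto H(x,0)$ Lipschitz and $\Lambda$-periodic. However, $V/\Lambda$ is not compact, so boundedness of $H(\cdot,0)$ is not automatic. I expect to get it from (H4), which already gives $\inf_x H(x,p)>-\infty$ for $\|p\|$ large, combined with convexity (H5) along the segment from $p$ to $-p$. The upper bound $\sup_x H(x,0)<\infty$ should be the standing boundedness used in \cite{park2026}; if it is not, I would have to add it as an implicit assumption.

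\textbf{Step 2: the construction.} Write $\phi(p)=\tfrac12\|p\|^2$ and choose constants $A,B$. Set
\[
\widetilde H(x,p):=\max\{H(x,p),\ \phi(p)-B\}\ \text{ for } \|p\|\le 2R,
\]
and let $\widetilde H(x,p):=\phi(p)+B'$ for large $\|p\|$. The cleaner way to combine these is
\[
\widetilde H(x,p)=\max\Bigl\{\min\bigl\{H(x,p),\ \ell(x,p)\bigr\},\ \phi(p)-B\Bigr\}.
\]
The difficulty is that $\min$ destroys convexity. The better route is to replace $H$ outside the ball by its convex extension
\[
\widetilde H(x,p):=\max\bigl\{h_x(p),\ \phi(p)-B\bigr\},
\]
where
\[
h_x(p)=\sup\{H(x,q)+\xi\cdot(p-q):\ \|q\|\le R,\ \xi\in\partial_pH(x,q)\}.
\]
This $h_x$ is the supremum of the supporting affine functions of $H(x,\cdot)$ at points of $B(0,R)$. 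It is convex, it equals $H$ on $B(0,R)$, and it is globally $L_{R'}$-Lipschitz in $p$ (using (H3) on a slightly larger ball to bound subgradients on $B(0,R)$ by $L_{R'}$). Choose $B$ so that $\phi-B<-M_R\le H$ on $B(0,R)$; then $\widetilde H=H$ for $\|p\|\le R$, in particular for $\|p\|\le C$.

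\textbf{Step 3: checking (H1)–(H5) and the growth bound.} For large $\|p\|$, the linear growth of $h_x$ means $\widetilde H=\phi-B$ exactly. Hence $\phi-B\le \widetilde H\le \phi+K_0$ for some $K_0$, since on bounded sets $h_x\le M_R+L_{R'}(\|p\|+R)$.
\begin{itemize}
\item (H5) holds because $\widetilde H$ is a max of convex functions.
\item (H1) and (H2) hold because the defining formula is equivariant: subgradients transform as $\xi\mapsto\xi\circ g$, and the norm and inner product are rearrangement invariant.
\item For (H3), the $p$-Lipschitz bound on balls is clear. For the $x$-Lipschitz bound, each affine piece $H(x,q)+\xi\cdot(p-q)$ depends on $x$ through the subgradient $\xi$, which need not be Lipschitz in $x$. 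I would avoid this by using instead
\[
h_x(p)=\sup_{\|q\|\le R}\ \inf_{\|\xi\|\le L_{R'}}\bigl[\,\dots\,\bigr].
\]
Alternatively, and more simply, I would define
\[
h_x(p):=\inf\Bigl\{\textstyle\sum_i\lambda_i H(x,q_i)+L_{R'}\bigl\|p-\sum_i\lambda_i q_i\bigr\| :\ \|q_i\|\le R\Bigr\},
\]
an inf-convolution/convex-hull construction. This is convex, coincides with $H$ on $B(0,R)$ by the Lipschitz bound and convexity, and inherits the $x$-Lipschitz bound $L_x(1+R)$ pointwise, since an infimum of uniformly Lipschitz functions is Lipschitz. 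This last verification is the main technical obstacle.
\end{itemize}
(H4) follows from the quadratic lower bound.

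\textbf{Step 4: the solutions are unchanged.} By Lemma~\ref{lemma:equi-Lip}, $u^\e$ is $C$-Lipschitz in $x$, so its sub- and superdifferentials lie in $B(0,C)$, where $H=\widetilde H$. Hence $u^\e$ is also a viscosity solution of the equation with $\widetilde H$, and uniqueness identifies it with the $\widetilde H$-solution. The same argument applies to the limit $u$, which is also $C$-Lipschitz; equivalently, $\overline H$ and $\overline{\widetilde H}$ agree on the relevant ball by passing to the limit.
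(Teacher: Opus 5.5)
Your final construction is the paper's. Because $H(x,\cdot)$ is already convex on the ball, the convex combinations in your last formula are redundant, and $h_x(p)=\inf_{\|q\|\le R}\{H(x,q)+L_{R'}\|p-q\|\}$ is exactly the paper's $H_R$. Taking the max with $\tfrac12\|p\|^2-B$ and the gradient-localization argument of Step 4 also match. The $x$-Lipschitz check you flag as the main obstacle is routine: it is an infimum of uniformly Lipschitz functions.

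The genuine gap is in Step 1. You do not establish that $M_R$ is finite, specifically the upper bound $\sup_x H(x,0)<\infty$. You need it for the upper growth bound $\widetilde H\le\frac12\|p\|^2+K_0$, which you derive from $h_x(p)\le M_R+L_{R'}(\|p\|+R)$, and for the choice of $B$. Your proposed route cannot produce it. Coercivity (H4) only gives lower bounds. Convexity along the segment from $p$ to $-p$ gives $H(x,0)\le\frac12\bigl(H(x,p)+H(x,-p)\bigr)$, which bounds $H(x,0)$ by quantities not known to be bounded above.

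No extra assumption is needed. The inference ``$V/\Lambda$ is not compact, so boundedness is not automatic'' is the misstep: only boundedness of the quotient matters, and that is elementary. For $x\in V$, let $z(i)$ be the componentwise nearest integer to $x(i)$. Then $z$ is measurable with $|z(i)|\le|x(i)|+\sqrt d/2$, so $z\in\Lambda$ and $\|x-z\|_{L^2}\le\sqrt d/2$. By (H1) and the first estimate in (H3) with $p=0$,
\[
|H(x,0)-H(0,0)|=|H(x-z,0)-H(0,0)|\le \tfrac{\sqrt d}{2}\,L_x .
\]
The second estimate in (H3) then gives $\sup_{x\in V,\ \|p\|\le R}|H(x,p)|\le |H(0,0)|+\tfrac{\sqrt d}{2}L_x+L_RR$. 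This is what the paper means by ``By (H1) and (H3), $C_R<\infty$''. Alternatively, you could use (H2) together with the compactness of $SS^d$, but that is more than needed. With this bound in place, the rest of your argument goes through.
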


\begin{proof}
Choose $R>C$ and let $K_R$ be a Lipschitz constant of $H$ on
$V\times B(0,R)$. By {\rm (H1)} and {\rm (H3)},
\[
C_R:=\sup_{x\in V,  \|p\|_{L^2}\leq R}|H(x,p)|<\infty.
\]
See Lemma~2.2 in \cite{park2026} for details.

Define the convex Lipschitz extension
\begin{equation}\label{eq: s2_Hr}
H_R(x,p):=
\inf_{\|q\|_{L^2}\leq R}
\left\{H(x,q)+K_R\|p-q\|_{L^2}\right\}.
\end{equation}
Note that $H_R (x,p) = H(x,p)$ on $V\times B(0,R)$.
Define
\begin{equation*}
    \eta_R(q) := \begin{cases}
        0, \qquad &\|q\|_{L^2} \le R,\\
        +\infty,\qquad &\|q\|_{L^2} > R.
    \end{cases}
\end{equation*}
For each $x \in V$, since \eqref{eq: s2_Hr} is the infimal convolution of the convex functions
\[
q\mapsto H(x,q)+\eta_R(q), \qquad \text{and} \qquad q\mapsto K_R\|q\|_{L^2},
\]
$p\mapsto H_R(x,p)$ is convex. It is also $K_R$-Lipschitz in $p$. By construction, we also have
\[
H_R(x+z,p)=H_R(x,p),
\qquad
H_R(x\circ g,p\circ g)=H_R(x,p)
\]
for every $z\in\Lambda$ and $g\in\mathcal G$. The local Lipschitz regularity in $x$ follows directly from {\rm (H3)}.

Choose
\[
    A\geq C_R+\frac12R^2
\]
and set
\[
    V(p):=\frac12\|p\|_{L^2}^2-A,
    \qquad
    \widetilde H(x,p):=\max\{H_R(x,p),V(p)\}
\]
for $x,p \in V$. If $\|p\|_{L^2}\leq R$, then
\[
    V(p)\leq -C_R\leq H(x,p)=H_R(x,p),
\]
so $\widetilde H=H$ on $V\times B(0,R)$. Since the maximum of two convex functions is convex, $\widetilde H$ remains convex in $p$. It also preserves {\rm (H1)--(H3)}, and it is coercive because $\widetilde H\geq V$.

Since $H_R(x,0)=H(x,0)$ and $H_R$ is $K_R$-Lipschitz in $p$,
\[
    H_R(x,p)\leq C_R+K_R\|p\|_{L^2}
    \leq \frac12\|p\|_{L^2}^2+C_R+\frac12K_R^2.
\]
Together with $\widetilde H\geq V$, this gives $K_0>0$ such that
\[
    \frac12\|p\|_{L^2}^2-K_0
    \leq\widetilde H(x,p)
    \leq\frac12\|p\|_{L^2}^2+K_0.
\]

Finally, every spatial gradient occurring in the viscosity inequalities for the $C$-Lipschitz function $u^\varepsilon(\cdot,t)$ has norm at most $C$. Since $R>C$ and $\widetilde H=H$ on $V\times B(0,R)$, uniqueness therefore shows that the oscillatory solutions are unchanged. Consequently, their homogenized limit is unchanged as well.
\end{proof}

By Lemma~\ref{lem:quadratic-reduction}, we assume that $H$ satisfies~\eqref{eq: quadratic growth}.

For every $(x,v) \in V \times V$, let
\[
    L(x,v) :=
    \sup_{p\in V}
    \left\{
    \langle p,v\rangle_{L^2}-H(x,p)
    \right\}
\]
be the Lagrangian associated with \(H\). It follows that:
\begin{equation*}
    \frac12\|v\|_{L^2}^2-K_0 \le L(x,v) \le \frac12\|v\|_{L^2}^2+K_0 \qquad \text{for all } (x,v) \in V\times V.
\end{equation*}
Furthermore, the periodicity and rearrangement invariance of
\(H\) pass to \(L\):
\begin{align*}
    L(x+z,v)=L(x,v)&,\qquad z\in\Lambda\\
    L(x\circ g,v\circ g)=L(x,v)&,\qquad g\in\mathcal G.
\end{align*}
These invariances will be used later to rearrange and translate curve pieces without changing their action.

Since the modified Hamiltonian is convex and has two-sided quadratic growth
in the momentum variable, the corresponding value function is the unique
viscosity solution of the Cauchy problem; see, for example,
\cite{C-L-2, ishii_optimalcontrol, Gomes-Nurbekyan-CalculusofVariation}. Consequently, for $(x,t) \in V\times (0,\infty)$ the solution $u^\e (x,t)$ to $\mathrm{(CP)_\e}$ admits the optimal-control representation
\begin{equation}\label{eq:optimal control_ue}
    u^\varepsilon(x,t)
    =
    \inf_{\substack{\eta\in AC([0,t];V)\\ \eta(t)=x}}
    \left\{
    u_0(\eta(0))
    +
    \int_0^t
    L\left(\frac{\eta(s)}{\varepsilon},\dot\eta(s)\right)\,ds
    \right\}.
\end{equation}
After the change of variables $\eta(s)=\varepsilon\gamma(s/\varepsilon)$, \eqref{eq:optimal control_ue} becomes
\begin{equation}\label{eq:optimalcontrol_ue_2}
    u^\varepsilon(x,t)
    =
    \inf_{\substack{\gamma\in AC([0,t/\e];V)
    \\\gamma(t/\varepsilon)=x/\varepsilon}}
    \left\{
    u_0(\varepsilon\gamma(0))
    +
    \varepsilon
    \int_0^{t/\varepsilon}
    L(\gamma(\tau),\dot\gamma(\tau))\,d\tau
    \right\}.
\end{equation}

Lemma~\ref{lemma:equi-Lip} reduces the convergence-rate analysis for \(u^\varepsilon\) on \(V\) to terminal points lying in the finite-dimensional subspace of constant configurations. Indeed, since both \(u^\varepsilon(x,t)\) and the homogenized limit \(u(x,t)\) can be compared with their values at the mean configuration \(Mx\), it suffices to estimate the oscillatory value function in \eqref{eq:optimalcontrol_ue_2} with a prescribed mean endpoint.
This motivates the  mean-restricted value function
\begin{equation}\label{def_Ue}
    U^\varepsilon(q,t)
    :=
    \inf_{\mathfrak m(x) = q} u^\varepsilon(x,t),
    \qquad q\in\mathbb R^d.
\end{equation}
By the mean-reduction estimate in Lemma~\ref{lemma:equi-Lip},
\begin{equation} \label{eq: mean reduction estimate}
    |u^\varepsilon(x,t)-U^\varepsilon(\mathfrak m(x),t)|
    \le C\varepsilon, \qquad x \in V.
\end{equation}

The role of the mean-restricted value function \(U^\varepsilon\) is to isolate the finite-dimensional macroscopic variable while preserving the microscopic oscillations in the Lagrangian action. Thus, the convergence-rate analysis for \(u^\varepsilon\) reduces to comparing \(U^\varepsilon\) with the homogenized limit \(\widetilde u\). Since the assumption \rm{(I1)} implies that the initial data depend only on the mean configuration, \(U^\varepsilon\) depends only on the initial and terminal means. This motivates the introduction of the following mean-endpoint metric.

\begin{defn}[Mean-Endpoint Metric]
    For \(t>0\) and \(a,q\in\mathbb R^d\),
    \begin{equation*}
        m(t,a,q) := \inf_{\gamma\in AC([0,t];V)} \left\{ \int_0^t L(\gamma(s),\dot\gamma(s))\,ds: \mathfrak m(\gamma(0))=a, \ \mathfrak m(\gamma(t))=q \right\}.
    \end{equation*} 
\end{defn}

Here, \(m(t,a,q)\) denotes the infimum of the action over all trajectories of duration \(t\) whose initial and terminal configurations have means \(a\) and \(q\), respectively.
From \eqref{eq:optimalcontrol_ue_2} and \eqref{def_Ue}, \(U^\varepsilon\) can be
written as
\begin{equation}\label{eq:Ue representation}
    U^\varepsilon(q,t)
    =
    \inf_{a\in\mathbb R^d}
    \left\{
    \widetilde u_0(a)
    +
    \varepsilon
    m\left(
    \frac{t}{\varepsilon},
    \frac{a}{\varepsilon},
    \frac{q}{\varepsilon}
    \right)
    \right\}, \qquad q\in\mathbb R^d.
\end{equation}
Thus, the quantitative convergence problem for \(u^\varepsilon\) reduces to the large-scale analysis of the mean-endpoint metric \(m\).

\subsection{Homogenized metric and optimal convergence}
The next step is to show that the mean-endpoint metric admits a deterministic large-scale limit. As in the finite-dimensional convex theory, this follows from almost-subadditivity and almost-superadditivity estimates. In the present setting, these estimates take the dyadic form
\begin{equation}\tag{almost-subadditivity}
    m(2t,0,2q)\le 2m(t,0,q)+C_R
\end{equation}
and
\begin{equation}\tag{almost-superadditivity}
    2m(t,0,q)\le m(2t,0,2q)+C_R
\end{equation}
whenever \(q\in \mathbb R^d\) and \(|q|\le Rt\). The proofs of these estimates are carried out in Section~\ref{section3}. Together, they imply that the rescaled actions stabilize up to a bounded error. This allows us to define the homogenized mean-endpoint metric:
\begin{equation*}
    \overline m(t,a,q)
    :=
    \lim_{k\to\infty}\frac1k m(kt,ka,kq).
\end{equation*}
More precisely, they yield the estimate
\begin{equation}\label{eq: estimate m and mbar}
    \left|
    \varepsilon
    m\left(
    \frac{t}{\varepsilon},
    \frac{a}{\varepsilon},
    \frac{q}{\varepsilon}
    \right)
    -
    \overline m(t,a,q)
    \right|
    \le C_R\varepsilon
\end{equation}
whenever \(a,q\in\mathbb R^d\) and \(|q-a|\le Rt\). We then identify \(\overline m\) with the effective Lagrangian: for every $a,q \in \R^d$,
\[
    \overline m(t,a,q) =
    t\overline L\left(\frac{q-a}{t}\right),
    \qquad
    \overline L=\overline H^*.
\]
We carry out this identification in Section~\ref{section4}.

Since the homogenized limit \(\widetilde u\) solves $(\overline{\mathrm{CP}})$, the Lax--Oleinik formula gives the representation
\begin{equation}\label{eq:HopfLax_u}
\widetilde u(q,t) = \inf_{a\in\mathbb R^d} \left\{\widetilde u_0(a) + t\overline L\left(\frac{q-a}{t}\right) \right\} = \inf_{a\in\mathbb R^d} \left\{\widetilde u_0(a)+\overline{m}(t,a,q) \right\}
\end{equation}
for every $q \in \R^d$. Combining \eqref{eq:Ue representation}, \eqref{eq: estimate m and mbar}, and \eqref{eq:HopfLax_u} gives
\[
    |U^\varepsilon(q,t)-\widetilde u(q,t)|
    \le C\varepsilon
\]
for every $q \in \R^d$.
Finally, \eqref{eq: mean reduction estimate} transfers the estimate back to the full solution \(u^\varepsilon\), giving the optimal convergence rate $O(\e)$:
\[
    \|u^\varepsilon(x,t) -\widetilde u(\mathfrak m(x),t)\|_{L^\infty(V \times [0,\infty))}
    \le C\varepsilon.
\]
See Section~\ref{section5} for details and an example showing the optimality of the rate.

\subsection{Infinite-dimensional curve-surgery}
We now explain the geometric mechanism behind the almost-subadditivity and almost-superadditivity estimates. The key idea is a curve-surgery argument: starting from one or more nearly minimizing curves, we cut them into suitable pieces, transform the pieces using the symmetries of the problem, and glue them back together by inserting short connectors.

For almost-subadditivity, the surgery is relatively simple. Starting from two near minimizers for $m(t,0,q)$ for fixed $q \in \R^d$, we translate and rearrange the second curve so that its initial point is close to the endpoint of the first curve, and then join the two curves by a short connector. The resulting curve is admissible for $m(2t,0,2q)$, thereby yielding the almost-subadditivity estimate. Since the original curves already use the full allotted time, we first compress a small subinterval of one curve to save a fixed amount of time for the connector. The quadratic bounds on \(L\) ensure that the resulting increase in action is uniformly bounded. See Section~\ref{section3} for details.

The almost-superadditivity estimate is more delicate. Starting from a near minimizer $\gamma:[0,2t] \to V$ for \(m(2t,0,2q)\), we need to split the curve into two families of pieces, each carrying mean displacement \(q\) over total time \(t\). This is precisely where the finite-dimensional curve-cutting lemma enters. We use the following topological lemma of Burago \cite{Burago}.

\begin{lem}\label{lem: burago}
Let \(\xi:[0,T]\to\mathbb R^m\) be continuous. Then there exist finitely many pairwise disjoint intervals 
\[ 
[a_i,b_i]\subset[0,T], \qquad i=1,\dots,k, 
\] 
with 
\[ 
k\le \frac{m+1}{2}, 
\] 
such that 
\[ 
\sum_{i=1}^k \bigl(\xi(b_i)-\xi(a_i)\bigr) = \frac{\xi(T)-\xi(0)}{2}. 
\] 
\end{lem}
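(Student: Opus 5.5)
The plan is to derive the lemma from the Borsuk--Ulam theorem, by the same argument used for the necklace-splitting (Hobby--Rice) theorem. Since $\xi$ is only continuous and need not have bounded variation, I will work directly with increments of $\xi$ over consecutive subintervals, not with a density of $\dot\xi$.

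\textbf{Construction of the odd map.} Parametrize the sphere $S^m=\{y=(y_0,\dots,y_m)\in\mathbb R^{m+1}:\sum_j y_j^2=1\}$. For $y\in S^m$, set $t_{-1}(y):=0$ and $t_j(y):=T\sum_{l\le j}y_l^2$, so that $0=t_{-1}\le t_0\le\dots\le t_m=T$ partition $[0,T]$ into $m+1$ consecutive closed intervals $J_j(y)=[t_{j-1}(y),t_j(y)]$ of lengths $Ty_j^2$. Define
\[
F(y):=\sum_{j=0}^m \operatorname{sgn}(y_j)\bigl(\xi(t_j(y))-\xi(t_{j-1}(y))\bigr)\in\mathbb R^m .
\]

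\textbf{Continuity and oddness.} The map $F$ is continuous. The only possible discontinuity comes from $\operatorname{sgn}(y_j)$ jumping when $y_j$ crosses $0$. At such a point the interval $J_j$ degenerates, the increment $\xi(t_j)-\xi(t_{j-1})$ tends to $0$ by uniform continuity of $\xi$, and so the jump is harmless. Moreover, $F(-y)=-F(y)$, because the $t_j$ depend only on the $y_j^2$. By Borsuk--Ulam, a continuous odd map $S^m\to\mathbb R^m$ has a zero, so there is $y^*\in S^m$ with $F(y^*)=0$. Writing $P$ and $N$ for the sums of increments over the intervals with $y^*_j>0$ and $y^*_j<0$ respectively, we have $P-N=0$ and $P+N=\xi(T)-\xi(0)$. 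Hence both $P$ and $N$ equal $\tfrac12(\xi(T)-\xi(0))$.

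\textbf{Counting the intervals.} Discard the degenerate intervals (those with $y^*_j=0$), which contribute nothing. Merge maximal runs of consecutive nondegenerate intervals carrying the same sign into single closed intervals; increments are additive along a run, so $P$ and $N$ are unchanged. The resulting blocks alternate in sign and number at most $m+1$. Therefore the positive blocks or the negative blocks, whichever family is smaller, number at most $\lfloor (m+1)/2\rfloor\le (m+1)/2$. Blocks of the same sign are separated by a block of the opposite sign of positive length, so they are pairwise disjoint closed intervals $[a_i,b_i]$. Taking this family gives the lemma.

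The main point requiring care is the continuity of $F$ across the hyperplanes $\{y_j=0\}$, which is exactly where the continuity of $\xi$ enters. Once that is settled, the rest is bookkeeping, and no finite-dimensional structure of the later Hilbert-space curves is needed: the lemma will be applied only to the mean-time projection in $\mathbb R^m$.
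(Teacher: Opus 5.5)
Your argument is correct and complete. $F$ is continuous and odd on $S^m$, since the sign of $y_j$ only jumps where $|t_j-t_{j-1}|=Ty_j^2\to 0$, and uniform continuity of $\xi$ makes that increment vanish. Borsuk--Ulam then gives $P=N=\tfrac12(\xi(T)-\xi(0))$, and merging the alternating nondegenerate blocks leaves at most $\lfloor (m+1)/2\rfloor$ pairwise disjoint intervals of one sign. That family may be empty, which is consistent because then $\xi(T)=\xi(0)$.

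The paper gives no proof of this lemma and simply quotes it from Burago. Your Hobby--Rice type Borsuk--Ulam construction is, to my recollection, the standard route to this result, so it fills in the cited statement with no gap.
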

In finite dimensions, this lemma is applied directly to the minimizing curve together with the time coordinate. In the present setting, the curve itself takes values in the infinite-dimensional Hilbert space \(V\), so the lemma cannot be applied directly to \(\gamma\). The key point is that the metric \(m\) imposes only mean endpoint constraints. Therefore the relevant finite-dimensional object is the mean-time path
\[
    \xi(s):=(\mathfrak m(\gamma(s)),s), \qquad 0\le s \le 2t.
\]
Applying Lemma~\ref{lem: burago} to \(\xi\) produces finitely many disjoint intervals whose total mean displacement is $q$ and whose total length is $t$. 

After the curve is cut at the level of the mean path, the remaining difficulty is to glue the corresponding pieces back together in the full space \(V\). It is resolved by the compactness of the quotient associated with periodicity and rearrangement invariance. 

We define the metric $d_{SS^d}: V\times V \to \R$, the equivalence relation $\sim$, and the quotient $SS^d$ as
\begin{equation*}
    d_{SS^d}(x,y):= \inf_{g\in G, z\in\Lambda} \|x-y\circ g-z\|_{L^2},\qquad
    x\sim y\Leftrightarrow d_{SS^d}(x,y)=0, \qquad
    SS^d:=V/{\sim}.
\end{equation*}
\noindent It is known \cite{Gomes-Nurbekyan-weakKAM} that $(SS^d,d_{SS^d})$ is compact. Hence there exists \(D>0\) such that for any \(x,y\in V\), one can choose \(g\in\mathcal G\) and \(z\in\Lambda\) with
\[
    \|y-(x\circ g+z)\|_{L^2}\le D.
\]
Thus, each curve piece can be rearranged and translated so that its initial point lies within a uniformly bounded \(L^2\)-distance of the endpoint of the preceding piece. Moreover, the action is preserved under this operation, since the Lagrangian is invariant under \(\Lambda\)-translations and \(\mathcal G\)-rearrangements. After saving a small amount of time and inserting short connectors between successive pieces, we obtain an admissible path for \(m(t,0,q)\), which yields the almost-superadditivity estimate. See Section~\ref{section3} for details.

\section{Almost-subadditivity and almost-superadditivity}\label{section3}
In this section, we establish the two-sided dyadic control of the mean-endpoint
metric. Almost-subadditivity follows by concatenating suitably transformed
near-minimizing curves. The reverse inequality requires the finite-dimensional
curve-cutting lemma to divide a near minimizer into two collections of pieces with
equal mean-time displacement, followed by a gluing argument in the full Hilbert
space.

\subsection{Almost-subadditivity}
We first record a connector estimate.
\begin{lem}[Uniform connector estimate]\label{lem: connector}
For every \(\tau>0\), there exists \(C_\tau>0\) such that for every \(x,y\in V\), there exist \(g\in\mathcal G\) and \(z\in\Lambda\) for which \(x\) can be connected to \(y\circ g+z\) in time \(\tau\) with action at most \(C_\tau\). 
\end{lem}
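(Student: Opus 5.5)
The plan is to combine the compactness of the quotient $SS^d$ with a straight-line connector and the upper quadratic bound on $L$.

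First, since $(SS^d,d_{SS^d})$ is compact, its diameter is finite. Call it $D$. Given $x,y\in V$, we have $d_{SS^d}(x,y)\le D$. By the definition of $d_{SS^d}$ as an infimum (and the symmetry $\|x-y\circ g-z\|=\|y\circ g+z-x\|$), there exist $g\in\mathcal G$ and $z\in\Lambda$ with
\[
\|x-(y\circ g+z)\|_{L^2}\le D+1 .
\]
The "$+1$" absorbs the possibility that the infimum is not attained. I will check that $y\circ g\in V$ and that the definition of $d_{SS^d}$ really rearranges $y$ and not $x$. If the paper's convention puts $g$ on the other variable, I can instead use $x\circ g^{-1}-z\circ g^{-1}$. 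Since $g^{-1}\in\mathcal G$ and $z\circ g^{-1}\in\Lambda$, the estimate still holds with the same constant.

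Next, define the straight-line connector
\[
\gamma(s):=x+\frac{s}{\tau}\bigl(y\circ g+z-x\bigr),\qquad s\in[0,\tau].
\]
It is absolutely continuous (indeed affine) in $V$, with $\gamma(0)=x$ and $\gamma(\tau)=y\circ g+z$. Its velocity is constant, $\dot\gamma=(y\circ g+z-x)/\tau$, so $\|\dot\gamma\|_{L^2}\le (D+1)/\tau$.

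Finally, the upper bound $L(x,v)\le \tfrac12\|v\|_{L^2}^2+K_0$, which follows from the quadratic growth of $H$ given by Lemma~\ref{lem:quadratic-reduction}, yields
\[
\int_0^\tau L(\gamma,\dot\gamma)\,ds\le \frac{(D+1)^2}{2\tau}+K_0\tau=:C_\tau .
\]
This constant is independent of $x$ and $y$, which is what the lemma requires.

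The only step needing care is the first one, passing from compactness of $SS^d$ to a uniform diameter bound. Compactness of a metric space implies it is totally bounded, hence bounded, so the diameter is finite. The paper cites this compactness from \cite{Gomes-Nurbekyan-weakKAM}, so I will take it as given. The rest is routine.
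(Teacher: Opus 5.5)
Your proposal is correct and matches the paper's proof essentially step for step. The paper fixes $D>\operatorname{diam}(SS^d)$, which plays the role of your $D+1$, chooses $g,z$ with $\|x-(y\circ g+z)\|_{L^2}\le D$, and bounds the action of the straight-line connector by $D^2/(2\tau)+K_0\tau$ using the upper quadratic bound on $L$. (Your caveat about the convention is unnecessary, since the paper's $d_{SS^d}(x,y)=\inf\|x-y\circ g-z\|_{L^2}$ already rearranges $y$.)
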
 
\begin{proof} 
Fix
\[
    D>\operatorname{diam}(SS^d).
\]
Choose \(g\in\mathcal G\) and \(z\in\Lambda\) such that 
\begin{equation*} 
\|x-(y\circ g+z)\|_{L^2}\le D. 
\end{equation*} 
Let 
\begin{equation*} 
\sigma(s) = x+\frac{s}{\tau}(y\circ g+z-x), \qquad 0\le s\le\tau. 
\end{equation*} 
Then \begin{equation*}
\|\dot\sigma(s)\|_{L^2}\le \frac{D}{\tau}.
\end{equation*} 
By the upper quadratic bound on \(L\), 
\begin{equation*} 
\int_0^\tau L(\sigma,\dot\sigma)\,ds \le \tau\left( \frac12\frac{D^2}{\tau^2}+K_0 \right) = \frac{D^2}{2\tau}+K_0\tau. 
\end{equation*} 
Thus, the connector cost is uniformly bounded. 
\end{proof}

We next prove the dyadic almost-subadditivity estimate. 

\begin{prop}\label{prop: subadd}
Fix \(R>0\). There exists \(C_R>0\) such that, for all \(t>0\) and
\(y\in\mathbb R^d\) satisfying
\[
    |y|\le Rt,
\]
one has
\[
    m(2t,0,2y)
    \le
    2 m(t,0,y)+C_R.
\]
\end{prop}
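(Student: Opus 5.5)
Take a near minimizer $\gamma:[0,t]\to V$ for $m(t,0,y)$ with action at most $m(t,0,y)+1$. Build a competitor for $m(2t,0,2y)$ in three pieces: a time-compressed copy of $\gamma$, a connector, and a translated and rearranged copy of $\gamma$ whose mean has been shifted by $y$. Two details need care. First, the connector must not change the mean. Second, when $t$ is small there is no room to compress.

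\textbf{Small times.} If $t\le 1$, we do not use near minimizers. Since $L\ge -K_0$, we have $m(t,0,y)\ge -K_0 t\ge -K_0$. The straight line $s\mapsto \frac{s}{2t}(2y)\chi_I$ on $[0,2t]$ has speed $|y|/t\le R$, so by the upper quadratic bound on $L$
\[
m(2t,0,2y)\le 2t\left(\tfrac12R^2+K_0\right)\le R^2+2K_0 .
\]
Hence the estimate holds with $C_R\ge R^2+4K_0$. So assume $t\ge1$ from now on.

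\textbf{Compression.} Define $\tilde\gamma(s)=\gamma(\lambda s)$ on $[0,t-\tau]$ with $\lambda=t/(t-\tau)$ and a fixed $\tau\le 1/2$, so that $\lambda\le 2$. Then $\mathfrak m(\tilde\gamma(t-\tau))=y$. By the quadratic bounds on $L$,
\[
\int_0^{t-\tau}L(\tilde\gamma,\dot{\tilde\gamma})\,ds\le \lambda\int_0^t\tfrac12\|\dot\gamma\|^2\,dr+K_0t,
\]
and $\tfrac12\int_0^t\|\dot\gamma\|^2\le \int_0^t L(\gamma,\dot\gamma)+K_0t$. The resulting error is $(\lambda-1)\bigl(m(t,0,y)+1+K_0t\bigr)+O(t)$. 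Since $\lambda-1=\tau/(t-\tau)=O(1/t)$, this is bounded by a constant provided $m(t,0,y)\le C(R)t$. That bound holds by the straight-line competitor: $m(t,0,y)\le t(\tfrac12R^2+K_0)$. This step is the main bookkeeping obstacle, and the growth assumption $|y|\le Rt$ is exactly what makes the extra action $O(1)$ rather than $O(t)$.

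\textbf{Connector and gluing.} Let $x=\tilde\gamma(t-\tau)$. By compactness of $SS^d$, choose $g\in\mathcal G$ and $z\in\Lambda$ with $\|x-(\gamma(0)\circ g+z)\|\le D$. The mean $\mathfrak m(\gamma(0)\circ g+z)=\mathfrak m(z)$ is not $y$ in general, so correct it: set $w:=x-\mathfrak m(x)\chi_I+\mathfrak m(\gamma(0)\circ g+z)\chi_I$. Define the second piece as
\[
\gamma_2(s):=\gamma(s)\circ g+z+\bigl(y-\mathfrak m(z)\bigr)\chi_I ,\qquad s\in[0,t].
\]
The added constant shift $c\chi_I$ with $c=y-\mathfrak m(z)$ is not in $\Lambda$, so $L$ is not exactly invariant under it. To avoid this, instead compare to $x$ directly: replace the target by $\gamma(0)\circ g+z$ and note $|c|=|\mathfrak m(x)-\mathfrak m(\gamma(0)\circ g+z)|\le D$. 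So I would instead absorb the mean defect by choosing $z$ with $\mathfrak m(z)$ an integer approximation. The cleanest fix is to prove the result with endpoint $2y+c$, where $|c|\le D$, and then adjust the endpoint via a final linear segment of duration $1$ with speed at most $D$. This segment costs $O(1)$ and needs one further unit of saved time, obtained by compressing with $\tau$ replaced by $\tau+1$, which is harmless since we may also take $t\ge 4$ after enlarging the small-time case.

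\textbf{Assembly.} The concatenation consists of the compressed first piece on $[0,t-\tau]$, the straight connector of Lemma~\ref{lem: connector} on $[t-\tau,t]$, the $\Lambda$- and $\mathcal G$-transformed copy of $\gamma$ on $[t,2t-1]$ (compressed similarly), and the final mean-adjusting segment. It is absolutely continuous, starts at mean $0$, and ends at mean $2y$. By invariance of $L$, the transformed copy has action equal to that of $\gamma$. The total action is $2m(t,0,y)+C_R$.
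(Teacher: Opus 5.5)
There is a genuine gap, and it is in the compression step. Your small-time case and your final gluing scheme are correct and match the paper's construction: a straight connector from $\tilde\gamma(t-\tau)$ to $\gamma(0)\circ g+z$, then the copy $\gamma\circ g+z$, then a last segment removing the mean defect $|c|\le D$.

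In the compression you bound the compressed action by the upper quadratic bound and the original action by the lower one. This loses $2K_0$ per unit time over an interval of length about $t$. Your own bookkeeping gives the error $(\lambda-1)\bigl(m(t,0,y)+1+K_0t\bigr)+2K_0t$; the $2K_0t$ is the $O(t)$ you wrote, and it is not bounded. The factor $\lambda-1=O(1/t)$ and the hypothesis $|y|\le Rt$ only control the kinetic part. As written you therefore get $m(2t,0,2y)\le 2m(t,0,y)+Ct$, which is useless for the dyadic argument. The same defect reappears when you compress the second copy to length $t-1$.

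The paper avoids this by compressing only locally. Since $\int_0^t\|\dot\gamma\|^2\,ds\le C_Rt$, averaging yields a unit interval $[r,r+1]$ with $\int_r^{r+1}\|\dot\gamma\|^2\,ds\le C_R$, and only this interval is squeezed to length $1/2$. The two-sided bounds are then applied on a set of bounded length, so the action grows by at most $\int_r^{r+1}\|\dot\gamma\|^2\,ds+\tfrac32K_0$.

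Alternatively, you can keep the uniform rescaling if you use convexity of $v\mapsto L(x,v)$, which is automatic because $L$ is a Legendre transform. For $\lambda\in[1,2]$, $L(x,\lambda v)\le(2-\lambda)L(x,v)+(\lambda-1)L(x,2v)$, hence $L(x,\lambda v)-L(x,v)\le(\lambda-1)\bigl(\tfrac32\|v\|^2+2K_0\bigr)$. Combine this with $1-\lambda^{-1}=\tau/t$, $\int_0^tL(\gamma,\dot\gamma)\,dr\ge-K_0t$, and the kinetic bound, and the increase becomes $O(1)$. With either repair, the rest of your argument goes through.
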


\begin{proof}
We first prove the estimate for \(t\ge 2\). The case \(0<t<2\) will be absorbed
into the constant at the end.

Fix \(0<\delta<1\). Choose two near minimizers for $m(t,0,y)$
\[
    \gamma_1,\gamma_2:[0,t]\to V
\]
such that
\[
    \mathfrak m(\gamma_i(0))=0,
    \qquad
    \mathfrak m(\gamma_i(t))=y,
    \qquad i=1,2,
\]
and
\begin{equation}\label{eq: subadd_1}
    \int_0^t L(\gamma_i(s),\dot\gamma_i(s))\,ds
    \le
    m(t,0,y)+\delta,
    \qquad i=1,2.
\end{equation}
We will concatenate \(\gamma_1\) with a suitable transform of \(\gamma_2\), inserting
bounded connectors. Since the two original curves already use a total time of \(2t\),
we must first save a fixed amount of time for the connectors.

Because \(|y|\le Rt\), the straight path from $0\chi_I$ to $y\chi_I$ gives
\[
     m(t,0,y)\le C_Rt.
\]
Therefore, for each \(i=1,2\),
\[
    \int_0^t L(\gamma_i,\dot\gamma_i)\,ds
    \le
    C_Rt+\delta.
\]
Using the lower quadratic bound, we obtain
\[
    \int_0^t \|\dot\gamma_i(s)\|_{L^2}^2\,ds
    \le
    C_Rt,
    \qquad i=1,2.
\]

We now save time along \(\gamma_1\). Since \(t\ge2\), by averaging there exists an interval \([r,r+1]\subset[0,t]\) such that
\[
    \int_r^{r+1}\|\dot\gamma_1(s)\|_{L^2}^2\,ds
    \le C_R.
\]

Compress this unit interval to a length of \(1/2\). Define the modified curve
\(\widehat\gamma_1:[0,t-\frac12]\to V\) by
\begin{equation*}
\begin{cases}
    \widehat\gamma_1(s)=\gamma_1(s),
    \qquad &0\le s\le r\\
    \widehat\gamma_1(s)=\gamma_1(r+2(s-r)),
    \qquad &r\le s\le r+\frac12\\
    \widehat\gamma_1(s)=\gamma_1(s+\tfrac12),
    \qquad &r+\frac12\le s\le t-\frac12.
\end{cases}
\end{equation*}
Then \(\widehat\gamma_1\) has the same endpoints as \(\gamma_1\), but its duration is \(t-1/2\). On the compressed interval, by the upper quadratic bound on \(L\),
\begin{align}\label{eq: subadd_2}
\notag
    \int_r^{r+1/2}
    L(\widehat\gamma_1(s),\dot{\widehat\gamma}_1(s))\,ds
    &\le
    \int_r^{r+1/2}
    \left(
    \frac12\|\dot{\widehat\gamma}_1(s)\|_{L^2}^2+K_0
    \right)\,ds  \\
    &=
    \int_r^{r+1}
    \|\dot\gamma_1(\theta)\|_{L^2}^2\,d\theta
    +\frac{K_0}{2} \le C_R.
\end{align}
The original action on \([r,r+1]\) is bounded below by
\begin{equation}
    \int_r^{r+1}L(\gamma_1,\dot\gamma_1)\,ds\ge -K_0.
\end{equation}
Therefore replacing \(\gamma_1\) by \(\widehat\gamma_1\) increases the action by at
most \(C_R\), while saving exactly \(1/2\) unit of time.

Next, we transform $\gam_2$. Write
\[
    X:=\widehat\gamma_1(t-1/2),
    \qquad
    A:=\gamma_2(0),
    \qquad
    B:=\gamma_2(t).
\]
We have
\[
    \mathfrak m(X) =y,
    \qquad
    \mathfrak m(A) =0,
    \qquad
    \mathfrak m(B) = y.
\]
By Lemma~\ref{lem: connector} with $\tau = 1/4$, after choosing $g \in \mathcal{G}$ and $z \in \Lam$, we may connect $X$ to $A\circ g+z$ in time $1/4$ using the connector $\sig_1$ with uniformly bounded action.
\begin{equation}\label{eq: subadd_4}
    \int_0^{1/4} L(\sigma_1,\dot\sigma_1)\,ds\le C.
\end{equation}
Define
\[
    \widetilde\gamma_2(s):=\gamma_2(s)\circ g+z, \qquad 0\le s\le t.
\]
By the invariance of \(L\) under \(\mathcal G\)-relabelings and \(\Lambda\)-translations,
\begin{equation}\label{eq: subadd_3}
    \int L(\widetilde\gamma_2,\dot{\widetilde\gamma}_2)\,ds
    =
    \int L(\gamma_2,\dot\gamma_2)\,ds.
\end{equation}
Let
\[
    \widetilde A:=\widetilde\gam_2 (0) = A\circ g+z,
    \qquad
    \widetilde B:=\widetilde\gam_2 (t) = B\circ g+z.
\]
Then
\begin{equation}\label{eq: endpoint displacement}
    \|X-\widetilde A\|_{L^2}\le D.
\end{equation}
Moreover, the mean displacement of the second path is unchanged because the same translation $z$ is added to both endpoints.
\begin{equation}\label{eq: mean displacement}
    \mathfrak m(\widetilde B) - \mathfrak m(\widetilde A)
    =
    \mathfrak m(B) - \mathfrak m(A)
    =
    y.
\end{equation}

After following \(\widehat\gamma_1\), $\sig_1$, and \(\widetilde\gamma_2\), the endpoint
is \(\widetilde B\). Its mean need not be exactly \(2y\). Define the mean error
\[
    e:=\mathfrak m(\widetilde B) - 2y.
\]
From \eqref{eq: endpoint displacement} and \eqref{eq: mean displacement}, we have
\begin{equation}\label{eq: endpoint errorbound}
    |e|
    =
    |\mathfrak m(\widetilde A) - \mathfrak m(X) |
    \le
    \|\widetilde A-X\|_{L^2}
    \le D.
\end{equation}
Thus the mean error is uniformly bounded.

We correct the endpoint by adding a final straight connector $\sig_2$ from \(\widetilde B\) to $\widetilde B-e\chi_I$ using the remaining time $1/4$:
\begin{equation*}
    \sig_2 (s) = \widetilde B - 4se\chi_I,\qquad 0\le s\le \frac14
\end{equation*}
The final point has mean $\mathfrak m(\widetilde B)-e = 2y$. From \eqref{eq: endpoint errorbound},
\begin{equation}\label{eq: subadd_5}
    \int_0^{1/4} L(\sigma_2,\dot\sigma_2)\,ds\le C.
\end{equation}

We concatenate the curves $\widehat\gam_1$, $\sig_1$, $\widetilde\gam_2$, and $\sig_2$. The resulting curve has initial mean \(0\), final mean \(2y\), and total duration $2t$. Therefore, it is an admissible curve for \( m(2t,0,2y)\). Combining \eqref{eq: subadd_1}-\eqref{eq: subadd_3} and \eqref{eq: subadd_5}, we obtain
\[
     m(2t,0,2y)
    \le
    2 m(t,0,y)+C_R+2\delta.
\]
Letting \(\delta\to0\), we obtain
\[
     m(2t,0,2y)
    \le
    2 m(t,0,y)+C_R
\]
for all \(t\ge2\) and \(|y|\le Rt\).

It remains to treat \(0<t<2\). In this range, \(|y|\le Rt\) implies
\[
     m(2t,0,2y)\le C_R,
\]
by the straight mean path. On the other hand, the lower quadratic bound gives
\[
     m(t,0,y)\ge -K_0t\ge -2K_0.
\]
Thus, after increasing \(C_R\), we also have
\[
     m(2t,0,2y)
    \le
    2 m(t,0,y)+C_R
\]
for \(0<t<2\). The proof is complete.
\end{proof}

\begin{rem}[General almost-subadditivity]\label{rem:general-subadditivity}
The argument in Proposition~\ref{prop: subadd} extends to curves with different durations and mean displacements. In particular, for every \(R>0\), there exists \(C_R>0\) such that the following estimates hold.

If \(t>0\), \(a,q\in\mathbb R^d\), and \(\rho,\sigma>0\) satisfy
\[
    |q-a|\le Rt,
    \qquad
    \rho t\ge 1,
    \qquad
    \sigma t\ge 1,
\]
then
\begin{equation}\label{eq: general subadd}
    m\bigl((\rho+\sigma)t,(\rho+\sigma)a,(\rho+\sigma)q\bigr)
    \le
    m(\rho t,\rho a,\rho q)
    +m(\sigma t,\sigma a,\sigma q)
    +C_R.
\end{equation}
These estimates follow by repeating the concatenation and curve-rearrangement arguments in Proposition~\ref{prop: subadd}. The only additional modification is the insertion of a uniformly bounded initial connector. Consequently, for fixed \(t,a,q\), the function
\[
    \rho\longmapsto m(\rho t,\rho a,\rho q), \qquad \rho > 0
\]
is almost-subadditive. This is the estimate used in Section~\ref{section4} to construct the homogenized mean metric by Fekete's lemma.

Also, if \(t,s\ge 1\) and \(a,b,c\in\mathbb R^d\) satisfy
\[
    |b-a|\le Rt,
    \qquad
    |c-b|\le Rs,
\]
then
\begin{equation}\label{eq: general subadd_2}
    m(t+s,a,c)
    \le
    m(t,a,b)+m(s,b,c)+C_R.
\end{equation}

In the finite-dimensional case considered in~\cite{Tran-Yu-optimal}, \eqref{eq: general subadd_2} holds without $C_R$. However, in the present infinite-dimensional setting, we need an extra constant because the metric prescribes only the mean of the endpoints. Thus, the endpoint of a near minimizer for \(m(t,a,b)\) need not coincide with the initial point of a near minimizer for \(m(s,b,c)\), even though both have mean \(b\). In order to construct the admissible curve for $m(t+s, a,c)$, we therefore transform the second curve and insert a uniformly bounded connector, and finally correct the resulting bounded terminal mean error. These modifications produce the additional constant $C_R$.
\end{rem}

\subsection{Almost-superadditivity}
We next prove the reverse inequality.

\begin{prop}\label{prop: superadd}
Fix \(R>0\). There exists \(C_R>0\) such that, for all \(t>0\) and
\(y\in\mathbb R^d\) satisfying
\[
    |y|\le Rt,
\]
one has
\[
    2m(t,0,y)\le m(2t,0,2y)+C_R.
\]
\end{prop}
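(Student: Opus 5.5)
We follow the outline of Section~\ref{section2}. First we treat large \(t\); the case of bounded \(t\) is absorbed into the constant at the end, as in Proposition~\ref{prop: subadd}. Fix \(\delta\in(0,1)\). Take a near minimizer \(\gamma:[0,2t]\to V\) for \(m(2t,0,2y)\), with action at most \(m(2t,0,2y)+\delta\). The straight mean path gives \(m(2t,0,2y)\le C_Rt\). Combined with the lower quadratic bound on \(L\), this yields \(\int_0^{2t}\|\dot\gamma\|_{L^2}^2\,ds\le C_Rt\).

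We apply Lemma~\ref{lem: burago} to the mean-time path \(\xi(s)=(\mathfrak m(\gamma(s)),s)\in\mathbb R^{d+1}\). This gives at most \(k\le (d+2)/2\) disjoint intervals \([a_i,b_i]\) satisfying
\[
\sum_i\bigl(\mathfrak m(\gamma(b_i))-\mathfrak m(\gamma(a_i))\bigr)=y,\qquad \sum_i(b_i-a_i)=t.
\]
The complementary intervals also number at most \(k+1\), and they also have total mean displacement \(y\) and total length \(t\). So \(\gamma\) splits into two families \(\mathcal F_1,\mathcal F_2\) of at most \(N:=k+1\) pieces each. Each family has total duration \(t\) and total mean displacement \(y\). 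Let \(A_j\) be the total action of \(\mathcal F_j\). Then \(A_1+A_2\) equals the action of \(\gamma\).

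For each family we build an admissible curve for \(m(t,0,y)\) with action at most \(A_j+C_R\). It then follows that
\[
2m(t,0,y)\le A_1+A_2+2C_R\le m(2t,0,2y)+\delta+2C_R,
\]
and we let \(\delta\to0\). The construction runs as follows.

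\textbf{Step 1: save time.} We need time for at most \(N+1\) connectors, each of length \(\tau=1/(2(N+1))\), so we must save \(1/2\) unit of time inside the family. Since the family has total length \(t\) and \(\int\|\dot\gamma\|^2\le C_Rt\), averaging gives a unit subinterval of one long piece with \(\int\|\dot\gamma\|^2\le C_R\). The obstacle is that the pieces may be very short or fragmented. Our first attempt is to select a piece of length \(\ge t/N\), which exists by pigeonhole. For \(t\ge 2N\) this piece has length \(\ge2\), and we compress a unit subinterval of it by a factor of two exactly as in Proposition~\ref{prop: subadd}. This costs at most \(C_R\).

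\textbf{Step 2: glue.} We concatenate the pieces in order. Before each new piece \(\gamma|_{[a,b]}\), we use Lemma~\ref{lem: connector} to choose \(g\in\mathcal G\) and \(z\in\Lambda\) so that the previous endpoint is within \(D\) of \(\gamma(a)\circ g+z\). We insert the straight connector, of action \(\le C_\tau\), and then follow \(\gamma(\cdot)\circ g+z\). This has the same action by invariance of \(L\) and the same mean displacement. The first piece may need its initial mean shifted to \(0\). We achieve this by translating it by \(-\mathfrak m(\gamma(a))\chi_I\). If that translation is not in \(\Lambda\), we instead prepend a connector from a mean-zero point. Either way, each junction introduces a mean error of at most \(D\).

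\textbf{Step 3: correct the endpoint.} The accumulated terminal mean error is at most \(ND\). We fix it with a final straight connector in the direction \(\chi_I\), as in \(\sigma_2\) of Proposition~\ref{prop: subadd}, using the remaining time \(\tau\) at bounded cost. The resulting curve has duration \(t\), initial mean \(0\) and terminal mean \(y\), and its action is at most \(A_j+C_R+(N+1)C_\tau\). Since \(N\) depends only on \(d\), this constant depends only on \(R\) and \(d\).

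\textbf{Small \(t\).} For \(t<2N\) we bound both sides directly. The straight mean path gives \(m(t,0,y)\le C_R\), and the lower bound gives \(m(2t,0,2y)\ge-2K_0t\ge -4NK_0\). Both are absorbed into \(C_R\).

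The main obstacle is Step~1 together with the bookkeeping that makes the final constant independent of \(t\). This hinges on the uniform bound \(k\le (d+2)/2\) from Lemma~\ref{lem: burago} and on the compactness of \(SS^d\) giving a uniform \(D\).
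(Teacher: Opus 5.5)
Your proposal is correct and follows the paper's proof essentially step for step. The shared steps are: Burago's lemma applied to the mean-time path $\xi(s)=(\mathfrak m(\gamma(s)),s)\in\mathbb R^{d+1}$; the split of $\gamma$ into the selected and complementary families, each of total duration $t$ and mean displacement $y$ with at most $(d+4)/2$ pieces; saving $1/2$ unit of time by compressing a low-energy unit subinterval of a piece of length at least $2$; and gluing the pieces after $\mathcal G$-rearrangements and $\Lambda$-translations with connectors of length at most $D$, followed by a final $\chi_I$-direction connector that removes a mean error of size at most $ND$. Your fallback of starting with a connector from a mean-zero point is exactly the paper's choice $Q_0=0\chi_I$ (a translation by $-\mathfrak m(\gamma(a))\chi_I$ preserves the action only when it lies in $\Lambda$, so the fallback is what is really used), your threshold $t\ge 2N$ matches the paper's $t\ge d+4$, and if a family has fewer than $N$ pieces, any unused connector time can be spent resting at a point at bounded cost.
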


\begin{proof}
We first prove the estimate for $t\ge d+4$. The bounded-time case \(0<t< d+4\) will be absorbed into the constant at the end.

Fix \(0<\delta<1\). Choose an absolutely continuous curve
\[
    \gamma:[0,2t]\to V
\]
such that
\[
    \mathfrak m(\gamma(0))=0,
    \qquad
    \mathfrak m(\gamma(2t)) = 2y,
\]
and
\begin{equation}
    \int_0^{2t}L(\gamma(s),\dot\gamma(s))\,ds
    \le
    m(2t,0,2y)+\delta.
\end{equation}
Because \(|y|\le Rt\), the straight path from $0\chi_I$ to $2y\chi_I$ gives
\[
    m(2t,0,2y) \le C_Rt.
\]
Using the lower quadratic bound on $L$, we obtain
\[
    \int_0^{2t}\|\dot\gamma(s)\|_{L^2}^2\,ds
    \le C_Rt.
\]

Define the finite-dimensional mean-time path
\[
    \xi(s):=(\mathfrak m(\gamma(s)),s),
    \qquad 0\le s\le 2t.
\]
Then
\[
    \xi(2t)-\xi(0)=(2y,2t).
\]
By applying the topological lemma to \(\xi\), there exist pairwise disjoint intervals
\[
    [a_i,b_i]\subset[0,2t],
    \quad i=1,\dots,k, \quad \text{with} \quad k \le \frac{d+2}{2},
\]
such that
\[
    \sum_{i=1}^k
    \bigl(\xi(b_i)-\xi(a_i)\bigr)
    =
    \frac{\xi(2t)-\xi(0)}{2}
    =
    (y,t).
\]
Equivalently,
\[
    \sum_{i=1}^k
    \bigl(\mathfrak m(\gamma(b_i))-\mathfrak m(\gamma(a_i))\bigr)
    =
    y, \quad \text{and} \quad \sum_{i=1}^k(b_i-a_i)=t.
\]
Let $A_{\rm sel}$ denote the total action along the selected pieces.
\[
    A_{\rm sel}
    :=
    \sum_{i=1}^k
    \int_{a_i}^{b_i}L(\gamma(s),\dot\gamma(s))\,ds.
\]
The selected pieces have total time \(t\), total mean displacement \(y\chi_I\), and at
most \((d+2)/2\) components. Their total kinetic energy satisfies
\[
    \sum_{i=1}^k
    \int_{a_i}^{b_i}\|\dot\gamma(s)\|_{L^2}^2\,ds
    \le C_Rt.
\]

We now save \(1/2\) unit of time from the selected pieces. Since the selected intervals have total length \(t\) and there are at most \((d+2)/2\) of them, one selected interval, say \([a_{i_0},b_{i_0}]\), has length at least $2t/(d+2) \ge 2$. Hence it contains a unit subinterval. By averaging over \([a_{i_0},b_{i_0}]\) and using the total kinetic-energy bound above, we may choose \([r,r+1]\subset[a_{i_0},b_{i_0}]\)
so that
\[
\int_r^{r+1}\|\dot\gamma(s)\|_{L^2}^2\,ds\le C_R.
\]
We compress this unit interval to a length of \(1/2\). 
Then this compression increases the selected action by at most \(C_R\), as in the subadditivity proof. We denote
the compressed selected pieces by
\[
    \eta_i:[0,\ell_i]\to V,
    \qquad i=1,\dots,k.
\]
They satisfy
\begin{equation} \label{eq: sum of means}
    \sum_{i=1}^k
    \left(
    \mathfrak m(\eta_i(\ell_i))-\mathfrak m(\eta_i(0))
    \right)
    =
    y, \qquad \sum_{i=1}^k\ell_i=t-\frac12,
\end{equation}
and
\begin{equation}\label{eq: total action of compressed}
    \sum_{i=1}^k
    \int_0^{\ell_i}L(\eta_i,\dot\eta_i)\,ds
    \le
    A_{\rm sel}+C_R.
\end{equation}

We next glue these compressed selected pieces into one admissible curve for \(m(t,0,y)\) in the spirit of Lemma~\ref{lem: connector}.
For the first piece, choose \(g_1\in\mathcal G\) and \(z_1\in\Lambda\) such that
\[
    \|\eta_1(0)\circ g_1+z_1\|_{L^2}\le D.
\]
Set
\[
    \tilde \eta_1(s):=\eta_1(s)\circ g_1+z_1,
    \qquad
    P_1:= \tilde \eta_1(0),
    \qquad
    Q_1:=\tilde \eta_1(\ell_1).
\]
Suppose that the transformed pieces have been constructed up to piece \(j-1\), and
let \(Q_{j-1}\) be the endpoint of the \((j-1)\)-st transformed piece. Choose
\(g_j\in\mathcal G\) and \(z_j\in\Lambda\) such that
\[
    \|Q_{j-1}-(\eta_j(0)\circ g_j+z_j)\|_{L^2}\le D.
\]
Define
\[
    \tilde \eta_j(s):=\eta_j(s)\circ g_j+z_j,
    \qquad
    P_j:=\tilde \eta_j(0),
    \qquad
    Q_j:=\tilde \eta_j(\ell_j).
\]
The invariance of \(L\) under relabelings and lattice translations gives
\begin{equation}\label{eq: unchanging action}
    \int_0^{\ell_j}L(\tilde \eta_j,\dot{\tilde{  \eta}}_j)\,ds
    =
    \int_0^{\ell_j}L(\eta_j,\dot\eta_j)\,ds.
\end{equation}
Moreover, the mean displacement of each piece is unchanged:
\begin{equation}\label{eq: unchanged mean displacements}
    \mathfrak m( Q_j) - \mathfrak m( P_j)
    =
    \mathfrak m(\eta_j(\ell_j)) - \mathfrak m(\eta_j(0)).
\end{equation}

We now connect $Q_{j-1}$ and $P_j$ using a straight path for $j=1,\dots,k$ with $Q_0 = 0\chi_I$. Here, each connector uses time $\frac{1}{2(k+1)}$. More precisely,
\begin{align*}
    \sig_j(s) = Q_{j-1} + 2s(k+1)(P_{j} - Q_{j-1}), \qquad 0 \le s \le \frac{1}{2(k+1)}
\end{align*}
for $j = 1,\dots, k$. By construction, these connector lengths are bounded by \(D\):
\begin{equation}\label{eq: bd for endpoints}
    \|P_{j}-Q_{j-1}\|_{L^2}\le D,
\end{equation}
for $j = 1,\dots, k$. After inserting $\sig_j$ before $\widetilde \eta_j$ for each $j = 1,\dots k$, the endpoint of the resulting curve is \(Q_k\). Its mean may not equal \(y\), so we estimate the mean error. Define
\[
    c_j:=\mathfrak m(P_{j}) - \mathfrak m(Q_{j-1}),
    \qquad j=1,\dots,k.
\]
Then by \eqref{eq: bd for endpoints},
\[
    |c_j| \le D, \qquad j = 1,\dots, k.
\]
We have the telescoping identity
\[
    \mathfrak m(Q_k)
    =
    c_1
    +(\mathfrak m(Q_1) - \mathfrak m(P_1))
    +c_2
    +(\mathfrak m(Q_2) - \mathfrak m(P_2))
    +\cdots
    +c_{k}
    +(\mathfrak m(Q_k) - \mathfrak m(P_k)).
\]
Using \eqref{eq: sum of means} and \eqref{eq: unchanged mean displacements}, we get
\[
    \mathfrak m(Q_k)- y =c_1+c_2+\cdots+c_{k}.
\]
Therefore
\begin{equation}\label{eq: bd for error}
    |\mathfrak m(Q_k)-y |\le kD.
\end{equation}
Set
\[
    e:= \mathfrak m(Q_k) - y.
\]
We add a final connector from \(Q_k\) to $Q_k-e\chi_I$ using the remaining time $\frac{1}{2(k+1)}$:
\begin{equation*}
    \sig_{k+1}(s) = Q_k - 2s(k+1)e\chi_I, \qquad 0 \le s \le \frac{1}{2(k+1)}.
\end{equation*}
The final point has mean $y$. By \eqref{eq: bd for error}, the length of the final connector is bounded by $kD$.
Together with \eqref{eq: bd for endpoints}, the lengths of connectors $\{\sig_j\}_{j = 1,\dots k+1}$ are bounded by a constant depending only on \(d\) and \(D\).

There are \(k+1\) connectors in total and each connector receives time $\frac{1}{2(k+1)}$.
Because \(k\le(d+2)/2\), this time is bounded below by a positive constant depending only on \(d\). Hence, the quadratic upper bound on \(L\) gives a uniform bound on the total connector action: 
\begin{equation} \label{eq: connector action}
    \sum_{j=1}^{k+1} \int_0^{\frac{1}{2(k+1)}} L(\sig_j, \dot\sig_j) \, ds \leq C.
\end{equation}
Therefore, concatenating $\{\widetilde\eta_j\}_{j = 1,\dots k}$ and $\{\sig_j\}_{j = 1,\dots k+1}$ produces an admissible curve for \(m(t,0,y)\), and combining \eqref{eq: total action of compressed}, \eqref{eq: unchanging action}, and \eqref{eq: connector action} gives
\begin{equation}\label{eq: super_sel}
    m(t,0,y)\le A_{\rm sel}+C_R.
\end{equation}

We repeat the same construction for the complementary intervals. Let
\[
    [\alpha_j,\beta_j],
    \qquad j=1,\dots,\ell,
\]
be the nontrivial connected components of
\[
    [0,2t]\setminus \bigcup_{i=1}^k(a_i,b_i).
\]
Then $\ell\le k+1\le (d+4)/2$.

Since the whole curve carries mean-time displacement \((2y,2t)\), while the selected intervals carry \((y,t)\), the complementary intervals also carry \((y,t)\). Then, one of them has length at least
\[
    \frac{t}{\ell}\geq\frac{2t}{d+4}\geq2.
\]
Hence the same averaging and time-saving argument used for the selected intervals applies to the complementary intervals. 

Let
\[
    A_{\rm comp}
    :=
    \sum_{j=1}^{\ell} \int_{\alpha_j}^{\beta_j}L(\gamma(s),\dot\gamma(s))\,ds.
\]
By the same construction, we obtain
\begin{equation}\label{eq: super_comp}
    m(t,0,y)\le A_{\rm comp}+C_R,
\end{equation}
for \(t\ge d+4\). 

Adding \eqref{eq: super_sel} and \eqref{eq: super_comp}, we obtain
\[
\begin{aligned}
    2m(t,0,y)
    &\le
    A_{\rm sel}+A_{\rm comp}+C_R  \\
    &=
    \int_0^{2t}L(\gamma(s),\dot\gamma(s))\,ds+C_R \\
    &\le
    m(2t,0,2y)+C_R+\delta.
\end{aligned}
\]
Letting \(\delta\to0\), we get
\[
    2m(t,0,y)\le m(2t,0,2y)+C_R
\]
for all \(t\ge d+4\).

Finally, suppose \(0<t<d+4\). Since \(|y|\le Rt\), the straight mean path gives
\[
    m(t,0,y)\le C_R.
\]
The lower quadratic bound gives
\[
    m(2t,0,2y)\ge -2K_0(d+4).
\]
After increasing \(C_R\), we obtain
\[
    2m(t,0,y)\le m(2t,0,2y)+C_R
\]
also in this bounded-time range. The proof is complete.
\end{proof}

Figure~\ref{fig:curve-cutting} summarizes the curve-cutting and gluing construction underlying the almost-superadditivity estimate.

\begin{figure}[!htbp]
    \centering
    \includegraphics[width=\textwidth]
    {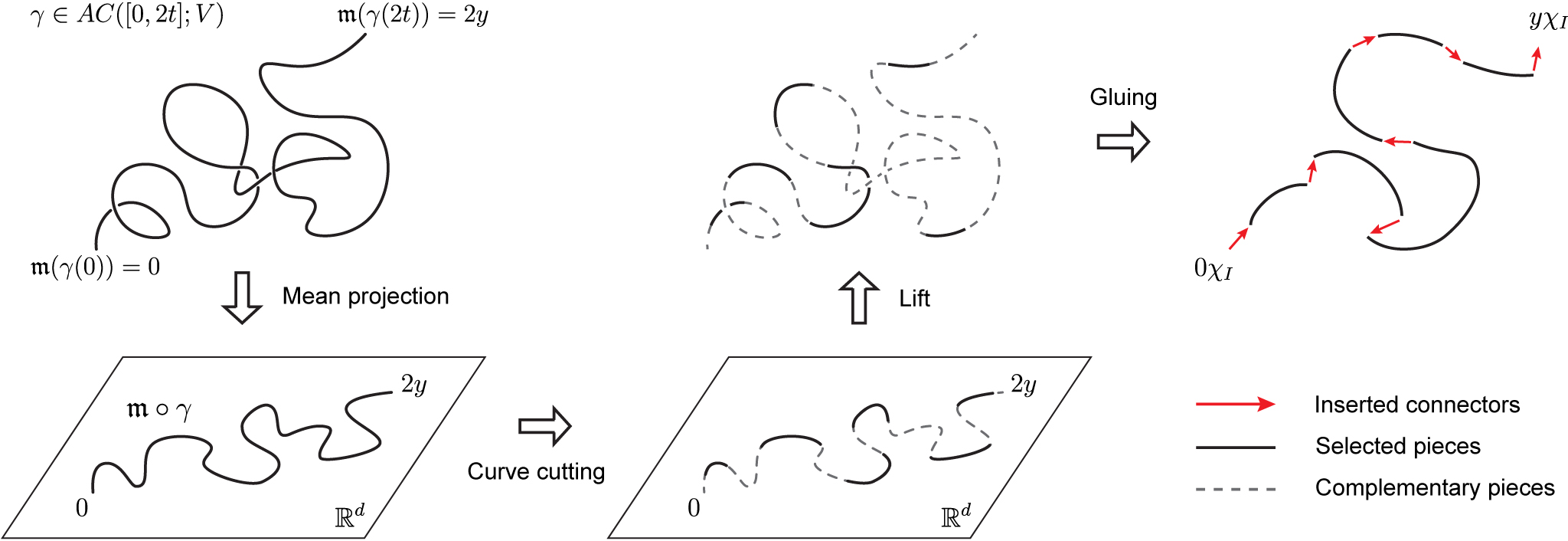}
    \caption{Illustration of the curve-cutting and gluing construction in the proof of Proposition~\ref{prop: superadd}. A near-minimizing curve for $m(2t,0,2y)$ is projected onto the mean-time path. Lemma~\ref{lem: burago} selects intervals with total mean-time displacement $(y,t)$. The corresponding pieces of $\gamma$ are lifted back to $V$, rearranged and translated, and joined by uniformly bounded connectors to produce an admissible path for $m(t,0,y)$.}
    \label{fig:curve-cutting}
\end{figure}

The same argument also gives the shifted form of the dyadic estimate.

\begin{cor}\label{cor: general super}
    For every \(R>0\), there exists \(C_R>0\) such that
    \[
        2m(t,a,q) \le m(2t,2a,2q) + C_R
    \]
    whenever \(t>0\) and \(|q-a|\le Rt\).
\end{cor}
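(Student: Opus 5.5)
The plan is to repeat the proof of Proposition~\ref{prop: superadd} with the base point $a$ in place of $0$. An alternative is to reduce to that proposition by a translation invariance of $m$, but that is only available for lattice shifts. Indeed, for $z_0\in\mathbb Z^d$ the constant configuration $z_0\chi_I$ lies in $\Lambda$, so $\gamma\mapsto\gamma+z_0\chi_I$ preserves the action and gives $m(t,a,q)=m(t,a+z_0,q+z_0)$. For general $a\in\mathbb R^d$ this identity is not available, so I will work directly with the shifted problem. Ingredients such as $m(t,a,q)\le m(t,0,q-a)+C$ (an extra bounded connector) would introduce errors on both sides that must be handled carefully. The direct rerun avoids this.

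Fix $t\ge d+4$ and $\delta\in(0,1)$, and take a near minimizer $\gamma:[0,2t]\to V$ for $m(2t,2a,2q)$. Comparing with the straight constant path from $2a\chi_I$ to $2q\chi_I$, and using $|2q-2a|\le 2Rt$, gives $\int_0^{2t}\|\dot\gamma\|^2\le C_Rt$. Apply Lemma~\ref{lem: burago} to $\xi(s)=(\mathfrak m(\gamma(s)),s)$. Since $\xi(2t)-\xi(0)=(2q-2a,2t)$, this yields at most $(d+2)/2$ selected intervals with total mean displacement $q-a$ and total length $t$. The complementary intervals carry the same totals.

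For each family, I will compress a unit subinterval of low kinetic energy inside a long piece of length at least $2$; this saves $1/2$ unit of time at cost $C_R$. Then I glue the pieces. For the first piece, I use compactness of $SS^d$ to choose $g_1,z_1$ with $\|a\chi_I-(\eta_1(0)\circ g_1+z_1)\|\le D$, and start from $Q_0:=a\chi_I$. Subsequent pieces are chosen as before, with straight connectors of time $1/(2(k+1))$.

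The telescoping identity becomes
\[
\mathfrak m(Q_k)-a=(q-a)+\sum_{j=1}^k c_j,
\]
so $|\mathfrak m(Q_k)-q|\le kD$. A final connector moving along $\chi_I$ corrects the mean to exactly $q$. Each family therefore gives an admissible curve for $m(t,a,q)$ with action at most the family's action plus $C_R$. Adding the two inequalities and letting $\delta\to0$ gives
\[
2m(t,a,q)\le m(2t,2a,2q)+C_R .
\]

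For $0<t<d+4$, the straight path gives $m(t,a,q)\le C_R$ because $|q-a|\le Rt$. The lower bound $L\ge -K_0$ gives $m(2t,2a,2q)\ge -2K_0(d+4)$. So the estimate holds after enlarging $C_R$.

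The main point to check is that all constants are independent of $a$. The comparison path cost depends only on $|q-a|/t\le R$. The connector and correction costs depend only on $D$, $d$ and $K_0$. The invariances of $L$ under $\Lambda$ and $\mathcal G$ do not involve $a$. The only place $a$ enters is the initial gluing point $Q_0=a\chi_I$, and compactness of $SS^d$ handles an arbitrary base point uniformly.
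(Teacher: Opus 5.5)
Your proposal is correct and is essentially the paper's argument. You apply Lemma~\ref{lem: burago} to the mean-time path of a near minimizer for $m(2t,2a,2q)$, so that each family carries $(q-a,t)$, and then glue starting from $a\chi_I$ with the same connectors and final mean correction.

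One side remark is false but harmless. Translation invariance of $m$ in the mean variables is not limited to $\mathbb Z^d$, because $\Lambda=L^2(I;\mathbb Z^d)$ contains non-constant elements such as $\chi_E e_j$ (with $E\subset I$ Borel and $e_j$ the $j$-th standard basis vector), whose means $\lambda_0(E)e_j$ together with integer shifts fill all of $\mathbb R^d$. Hence $m(t,a,q)=m(t,0,q-a)$ exactly, and the corollary follows at once from Proposition~\ref{prop: superadd} with $y=q-a$.
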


To prove Corollary~\ref{cor: general super}, we apply Lemma~\ref{lem: burago} to the same mean-time path. Then, the selected and complementary families each have duration $t$ and mean displacement $q-a$. After repeating the gluing argument with initial mean $a$ and terminal mean $q$, we obtain the result.

Combining~\eqref{eq: general subadd} with Corollary~\ref{cor: general super}, we obtain the two-sided dyadic estimate
\begin{equation}\label{eq: s3_22}
    \left|
    m(2t,2a,2q)-2m(t,a,q)
    \right|
    \le C_R
\end{equation}
whenever \(|q-a|\le Rt\). In the next section, the general almost-subadditivity estimate will provide the existence of the large-scale limit, while this two-sided dyadic bound will yield the uniform \(O(1)\) error.

\section{The homogenized metric and the effective Lagrangian} \label{section4}
\subsection{The homogenized mean metric}
We now construct the homogenized metric from the large-scale behavior of the mean-endpoint metric. The general almost-subadditivity estimate gives the existence of the large-scale limit through Fekete's lemma, while the two-sided dyadic estimate yields a uniform bound on the error between the mean-endpoint metric and the homogenized metric.

\begin{prop}\label{prop: existance mbar}
Fix \(R>0\). There exists \(C_R>0\) such that, for every \(t>0\) and \(a,q\in\mathbb R^d\) satisfying $|q-a|\le Rt$, the limit
\begin{equation}\label{eq: s5_0}
    \overline m(t,a,q)
    :=
    \lim_{\rho\to\infty}
    \frac{1}{\rho}
    m(\rho t,\rho a,\rho q)
\end{equation}
exists. Moreover,
\begin{equation}\label{eq: s5_0.5}
    \left|
    m(t,a,q)-\overline m(t,a,q)
    \right|
    \le C_R.
\end{equation}
\end{prop}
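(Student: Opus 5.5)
The plan has three parts: obtain existence of the limit from the general almost-subadditivity \eqref{eq: general subadd} via a Fekete-type argument, control the dyadic sequence with the two-sided estimate \eqref{eq: s3_22}, and pass from integer to continuous scaling parameters using continuity of $m$ in the time variable.

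Fix $t,a,q$ with $|q-a|\le Rt$ and set $\phi(\rho):=m(\rho t,\rho a,\rho q)$. Note that $|\rho q-\rho a|\le R\rho t$ for every $\rho>0$, so all the estimates of Section~\ref{section3} apply at every scale with the same constant $C_R$.

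\textbf{Step 1: existence of the limit.} By \eqref{eq: general subadd}, whenever $\rho t,\sigma t\ge1$ we have
\[
\phi(\rho+\sigma)\le\phi(\rho)+\phi(\sigma)+C_R .
\]
Hence $\psi:=\phi+C_R$ is subadditive on $\{\rho\ge 1/t\}$. The quadratic lower bound gives $\phi(\rho)\ge -K_0\rho t$, and the straight mean path gives $\phi(\rho)\le C_R\rho t$. So $\psi(\rho)/\rho$ is bounded, and the continuous-parameter Fekete lemma yields existence of $\lim_{\rho\to\infty}\psi(\rho)/\rho=\lim\phi(\rho)/\rho$.

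Fekete for real parameters needs some local boundedness of $\psi$ on compact intervals of length comparable to $1/t$. The two-sided bounds above already give $|\phi(\rho)|\le C\rho t$, which suffices. Alternatively, one first takes the limit along $\rho\in\mathbb N$, or along $\rho\in h\mathbb N$, and then compares real $\rho$ with $\lfloor\rho\rfloor$. That comparison is needed anyway in Step 3.

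\textbf{Step 2: dyadic error bound.} Apply \eqref{eq: s3_22} at scale $2^j$, i.e.\ to $(2^jt,2^ja,2^jq)$:
\[
\Bigl|\frac{\phi(2^{j+1})}{2^{j+1}}-\frac{\phi(2^j)}{2^j}\Bigr|\le \frac{C_R}{2^{j+1}} .
\]
Summing the telescoping series over $j\ge0$ and using that the dyadic subsequence converges to $\overline m(t,a,q)$ by Step 1 gives
\[
|\phi(1)-\overline m(t,a,q)|\le C_R\sum_{j\ge0}2^{-j-1}=C_R,
\]
which is exactly \eqref{eq: s5_0.5}. The same argument at scale $\rho$ gives $|\phi(\rho)-\rho\overline m(t,a,q)|\le C_R$ for all $\rho>0$. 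Here we use the homogeneity $\overline m(\rho t,\rho a,\rho q)=\rho\,\overline m(t,a,q)$, which is immediate from the definition once the limit exists.

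\textbf{Step 3: the main obstacle.} The delicate point is that \eqref{eq: general subadd} requires $\rho t,\sigma t\ge1$, and the constants must be uniform in $(t,a,q)$ with only $|q-a|\le Rt$ assumed. For small $t$ the subadditivity holds only beyond the threshold $\rho\ge1/t$. This is harmless for the limit, but I will check that Step 2 does not use subadditivity at all: the dyadic estimate \eqref{eq: s3_22} is valid for every $t>0$. So uniformity comes from Step 2 alone, and Step 1 is only used qualitatively.

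If the continuous Fekete lemma causes trouble, I would replace Step 1 by a direct argument. Step 2 shows that $2^{-j}\phi(2^j\rho)$ is Cauchy for every fixed $\rho$, with limit $\Phi(\rho)$ satisfying $|\phi(\rho)-\Phi(\rho)|\le C_R$. The limit along the full parameter $\rho\to\infty$ then follows from
\[
|\phi(\rho)-\rho\,\Phi(1)|\le |\phi(\rho)-\Phi(\rho)|+|\Phi(\rho)-\rho\,\Phi(1)|.
\]
The first term is at most $C_R$. For the second term, I would use \eqref{eq: general subadd} together with Corollary~\ref{cor: general super}-type comparisons between nearby scales to show $\Phi(\rho)=\rho\,\Phi(1)$, via linearity on rationals plus monotone/Lipschitz control of $\Phi$ in $\rho$. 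That control comes from the time-compression and connector bounds of Lemma~\ref{lem: connector}.

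Dividing by $\rho$ then gives the limit \eqref{eq: s5_0}.
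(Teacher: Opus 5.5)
Your proposal is correct and follows the paper's proof. Existence of the limit comes from the continuous almost-subadditive Fekete lemma applied to $\rho\mapsto m(\rho t,\rho a,\rho q)$, using \eqref{eq: general subadd} and the linear two-sided bounds; the uniform bound \eqref{eq: s5_0.5} comes from telescoping the two-sided dyadic estimate \eqref{eq: s3_22} along $2^{-n}m(2^nt,2^na,2^nq)$ and passing to the limit along the dyadic subsequence. You note that uniformity in $t$ comes from the dyadic step alone, which is exactly how the paper's argument works, so the fallback sketched in your Step 3 is not needed.
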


\begin{proof}
Fix \(t>0\) and \(a,q\in\mathbb R^d\) with \(|q-a|\le Rt\), and define
\[
    F(\rho):=m(\rho t,\rho a,\rho q),
    \qquad \rho>0.
\]
The general almost-subadditivity estimate from Section 3 gives
\begin{equation}\label{eq: s5_1}
    F(\rho+\sigma)
    \le
    F(\rho)+F(\sigma)+C_R
\end{equation}
for all sufficiently large \(\rho,\sigma>0\). The quadratic bounds on \(L\) also give
\begin{equation}\label{eq: s5_2}
    -C_{t,R} \rho\le F(\rho)\le C_{t,R} \rho.
\end{equation}
Together with \eqref{eq: s5_1} and \eqref{eq: s5_2}, the continuous almost-subadditive version of Fekete's lemma yields the existence of the finite limit $\lim_{\rho \to \infty} F(\rho)/\rho$. This proves the existence of the homogenized mean-endpoint metric $\overline m$.

We next prove the bounded-error estimate.
Set
\[
    A_n
    :=
    2^{-n}m(2^nt,2^na,2^nq), \qquad n\in \N.
\]
By Propositions~\ref{prop: subadd} and \ref{prop: superadd}, together with~\eqref{eq: s3_22}, we obtain
\[
    |A_{n+1}-A_n|
    \le
    2^{-(n+1)}C_R, \qquad n\in\N.
\]
Consequently,
\begin{equation}\label{eq: s5_3}
    |A_N-A_0|
    \le
    \sum_{n=0}^{N-1}|A_{n+1}-A_n| 
    \le
    C_R\sum_{n=0}^{N-1}2^{-(n+1)}
    \le C_R.
\end{equation}
Since the limit~\eqref{eq: s5_0} exists, its dyadic subsequence converges to the same limit:
\[
    \lim_{N\to\infty}A_N
    =
    \overline m(t,a,q).
\]
Letting \(N\to\infty\) in~\eqref{eq: s5_3}, we conclude that
\[
    \left|
    m(t,a,q)-\overline m(t,a,q)
    \right|
    \le C_R.
\]
This proves~\eqref{eq: s5_0.5}.
\end{proof}

We next establish the structural properties of the homogenized metric. These properties reduce the general mean-endpoint problem to the unit-time, zero-initial-mean function \(q\mapsto\overline m(1,0,q)\), which will be identified with the effective Lagrangian.

\begin{lem}\label{lem: mbarproperty}
For every \(t>0\), \(a,q,c\in\mathbb R^d\), and \(\lambda>0\), the homogenized
mean metric satisfies
\begin{equation}\label{eq: s5_4}
    \overline m(\lambda t,\lambda a,\lambda q)
    =
    \lambda\overline m(t,a,q)
\end{equation}
and
\begin{equation}\label{eq: s5_5}
    \overline m(t,a+c,q+c)
    =
    \overline m(t,a,q).
\end{equation}
\end{lem}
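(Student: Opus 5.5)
Both identities follow from the definition of \(\overline m\) as a limit in Proposition~\ref{prop: existance mbar}; the translation invariance additionally needs an exact invariance of \(m\) under constant shifts.

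\textbf{Homogeneity \eqref{eq: s5_4}.} Fix \(\lambda>0\) and choose \(R\) with \(|q-a|\le Rt\). Then \(|\lambda q-\lambda a|\le R\lambda t\), so Proposition~\ref{prop: existance mbar} guarantees that both limits exist. Substituting \(\rho'=\rho\lambda\) gives
\[
\frac{1}{\rho}m(\rho\lambda t,\rho\lambda a,\rho\lambda q)
=\lambda\cdot\frac{1}{\rho'}m(\rho' t,\rho' a,\rho' q).
\]
Since the limit in \eqref{eq: s5_0} is taken over continuous \(\rho\to\infty\) and not only over integers, letting \(\rho\to\infty\) yields \eqref{eq: s5_4} at once.

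\textbf{Translation invariance \eqref{eq: s5_5}.} First I would show that \(m\) is exactly invariant under shifting both endpoint means by a lattice vector \(z\in\mathbb Z^d\). If \(\gamma\) is admissible for \(m(t,a,q)\), then \(\gamma+z\chi_I\) is admissible for \(m(t,a+z,q+z)\): indeed \(z\chi_I\in\Lambda\), the velocity is unchanged, and \(L\) is \(\Lambda\)-periodic, so the action is the same. The map is reversible, hence \(m(t,a+z,q+z)=m(t,a,q)\).

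For a general \(c\in\mathbb R^d\), write \(\rho c=z_\rho+r_\rho\) with \(z_\rho\in\mathbb Z^d\) and \(r_\rho\in[0,1)^d\). Then
\[
m(\rho t,\rho a+\rho c,\rho q+\rho c)=m(\rho t,\rho a+r_\rho,\rho q+r_\rho),
\]
and it remains to compare this with \(m(\rho t,\rho a,\rho q)\) up to an error \(C\) independent of \(\rho\).

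\textbf{Main step: bounded mean perturbations.} I expect the key estimate to be
\[
|m(T,a+r,q+r)-m(T,a,q)|\le C
\]
for \(|r|\le\sqrt d\), \(|q-a|\le RT\), and \(T\ge 2\). To prove it, take a near minimizer for \(m(T,a,q)\) and save half a unit of time by the compression argument of Proposition~\ref{prop: subadd}, at a cost of at most \(C_R\). Then prepend a straight connector from \(\gamma(0)+r\chi_I\) to \(\gamma(0)\) and append a straight connector from \(\gamma(T)\) to \(\gamma(T)+r\chi_I\), each running in time \(1/4\). Each connector has length at most \(\sqrt d\), so its action is uniformly bounded by the quadratic upper bound on \(L\). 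This gives one inequality, and the reverse follows symmetrically. Alternatively, one can invoke \eqref{eq: general subadd_2} twice with short time pieces.

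Dividing by \(\rho\) and letting \(\rho\to\infty\), the \(O(1)\) error vanishes, which yields \eqref{eq: s5_5}.
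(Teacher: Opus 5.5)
Your proposal is correct and follows the paper's proof essentially step for step. Homogeneity comes from the substitution $\rho'=\rho\lambda$ in the continuous-parameter limit. Translation invariance comes from exact invariance of $m$ under lattice shifts $z\chi_I\in\Lambda$, combined with the bounded endpoint-perturbation estimate, which you prove as the paper does: save half a unit of time by compression and insert two connectors of length at most $\sqrt d$.

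The only loose point is your parenthetical alternative via \eqref{eq: general subadd_2}. Splitting off unit-time pieces leaves you comparing $m(T-2,a,q)$ with $m(T,a,q)$, which again needs the compression argument, so the first route is the one to keep.
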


\begin{proof}
We first prove positive homogeneity. By construction,
\[
\begin{aligned}
    \overline m(\lambda t,\lambda a,\lambda q)
    &=
    \lim_{\rho\to\infty}
    \frac{1}{\rho}
    m(\rho\lambda t,\rho\lambda a,\rho\lambda q) \\
    &=
    \lambda
    \lim_{\rho\to\infty}
    \frac{1}{\rho\lambda}
    m(\rho\lambda t,\rho\lambda a,\rho\lambda q) = \lambda\overline m(t,a,q).
\end{aligned}
\]

We next record a bounded endpoint-perturbation estimate. For every \(R,A>0\), there exists \(C_{R,A}>0\) such that
\begin{equation}\label{eq: s5_6}
    \left|
    m(t,a+\alpha,q+\beta)-m(t,a,q)
    \right|
    \le C_{R,A}
\end{equation}
whenever
\[
    t\ge 2,\qquad |q-a|\le Rt,
    \qquad |\alpha|+|\beta|\le A.
\]
The proof is analogous to Proposition~\ref{prop: subadd}. Take a near minimizer \(\gamma:[0,t]\to V\) for \(m(t,a,q)\) and save \(1/2\) unit of time while increasing the action by at most \(C_R\). We then use the saved time to insert a connector from \(\gam(0)+\alpha\chi_I\) to \(\gam(0)\) and another connector from \(\gam(t)\) to \(\gam(t)+\beta\chi_I\). The lengths of these connectors are bounded by \(A\). The resulting curve is admissible for \(m(t,a+\alpha,q+\beta)\), and therefore
\[
    m(t,a+\alpha,q+\beta)
    \le
    m(t,a,q)+C_{R,A}.
\]
Applying the same argument in the reverse direction proves~\eqref{eq: s5_6}.

Now fix \(c\in\mathbb R^d\). For every \(\rho>0\), choose
\(z_\rho\in\mathbb Z^d\) such that
\begin{equation*}\label{eq: s5_7}
    |\rho c-z_\rho|\le C_d
\end{equation*}
for a dimensional constant $C_d$. The periodicity of \(L\) implies
\[
    m(\rho t,\rho(a+c),\rho(q+c))
    =
    m(\rho t,\rho a+\rho c-z_\rho,\rho q+\rho c-z_\rho).
\]
Hence, for all sufficiently large \(\rho\), the endpoint-perturbation estimate gives
\begin{equation}\label{eq: s5_8}
    \big|
    m(\rho t,\rho(a+c),\rho (q+c))
    -
    m(\rho t,\rho a,\rho q)
    \big|
    \le C_R.
\end{equation}
Dividing~\eqref{eq: s5_8} by \(\rho\) and sending \(\rho\to\infty\), we obtain
\[
    \overline m(t,a+c,q+c)
    =
    \overline m(t,a,q).
\]
This proves \eqref{eq: s5_5}.
\end{proof}

We conclude with the quantitative metric estimate used in the proof of the optimal
homogenization rate.

\begin{lem}\label{lem: bddestimate_mbar}
Fix \(R>0\). There exists \(C_R>0\) such that, for every \(t>0\),
\(a,q\in\mathbb R^d\), and \(\varepsilon > 0\) satisfying
\[
    |q-a|\le Rt,
\]
we have
\[
    \left|
    \varepsilon
    m\left(
    \frac{t}{\varepsilon},
    \frac{a}{\varepsilon},
    \frac{q}{\varepsilon}
    \right)
    -
    \overline m(t,a,q)
    \right|
    \le C_R\varepsilon.
\]
\end{lem}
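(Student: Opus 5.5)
The plan is to obtain the lemma by rescaling Proposition~\ref{prop: existance mbar} and using the homogeneity identity \eqref{eq: s5_4} from Lemma~\ref{lem: mbarproperty}. Fix $t>0$, $a,q$ with $|q-a|\le Rt$, and $\varepsilon>0$. Set $t'=t/\varepsilon$, $a'=a/\varepsilon$, $q'=q/\varepsilon$. The constraint is invariant under this scaling: $|q'-a'|=|q-a|/\varepsilon\le Rt/\varepsilon=Rt'$. Hence \eqref{eq: s5_0.5} applies to $(t',a',q')$ and gives
\[
    \bigl|m(t',a',q')-\overline m(t',a',q')\bigr|\le C_R ,
\]
with $C_R$ independent of $t',a',q'$.

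Next apply \eqref{eq: s5_4} with $\lambda=1/\varepsilon$ to get
\[
    \overline m(t',a',q')=\varepsilon^{-1}\overline m(t,a,q).
\]
Multiplying the previous inequality by $\varepsilon$ then yields
\[
    \Bigl|\varepsilon m\bigl(\tfrac t\varepsilon,\tfrac a\varepsilon,\tfrac q\varepsilon\bigr)-\overline m(t,a,q)\Bigr|\le C_R\varepsilon ,
\]
which is the claim.

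The only step that needs care is confirming that the constant in \eqref{eq: s5_0.5} is uniform over all admissible triples, including very small $t'$. The proof of Proposition~\ref{prop: existance mbar} relies on the dyadic two-sided bound \eqref{eq: s3_22}, which comes from Proposition~\ref{prop: subadd} and Corollary~\ref{cor: general super}. Both of these handle bounded times by enlarging $C_R$, which uses the straight mean path for the upper bound and the lower quadratic bound on $L$ for the lower bound. So the telescoping estimate $|A_N-A_0|\le C_R$ holds for every $t'>0$, and the bound is uniform.

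I also need to confirm that $\overline m(t',a',q')$ exists and equals the limit of the dyadic subsequence. That is the existence part of Proposition~\ref{prop: existance mbar}, which applies because $|q'-a'|\le Rt'$. With uniformity and existence checked, the lemma follows with no further estimates.
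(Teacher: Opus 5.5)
Your argument is correct and is essentially the paper's proof. You rescale to $(t/\varepsilon,a/\varepsilon,q/\varepsilon)$, note that the constraint $|q-a|\le Rt$ is scale-invariant, apply the uniform bound \eqref{eq: s5_0.5} from Proposition~\ref{prop: existance mbar}, and use the homogeneity \eqref{eq: s5_4} to identify $\varepsilon\,\overline m(t/\varepsilon,a/\varepsilon,q/\varepsilon)$ with $\overline m(t,a,q)$. The paper phrases this via $\overline m(t,a,q)=t\,\overline m(1,0,(q-a)/t)$, which also uses translation invariance, but as your argument shows, homogeneity alone suffices.
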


\begin{proof}
Lemma~\ref{lem: mbarproperty} implies
\[
    \overline m(t,a,q)
    =
    t\overline m\left(1,0,\frac{q-a}{t}\right).
\]
Then Proposition~\ref{prop: existance mbar} proves the lemma.
\end{proof}

We have now constructed the homogenized metric and shown that it approximates the microscopic mean-endpoint metric up to a uniformly bounded error. It remains to identify the homogenized metric with the effective Lagrangian arising from the cell problem.

\subsection{Identification of the homogenized mean metric}
In this section, we identify \(\overline m\) with the effective Lagrangian. The identification is obtained through convex duality. We define the Legendre dual of the unit-time homogenized metric and compare it with the effective Hamiltonian. Recall that the effective Hamiltonian is only defined on the subspace of mean configurations. 
% Write
% \[ 
% \bar H_e(p)=\bar H(p\chi_I), \qquad p\in\mathbb R^d, 
% \] 
% and let

Let
\[ 
\overline L(q):= \sup_{p \in \R^d} \{p\cdot q - \overline H (p)\}, \qquad q\in\R^d
\] 
be the effective Lagrangian, which is the Legendre transform of the effective Hamiltonian.

\begin{prop}\label{prop: mbar identification}
For every \(t>0\) and \(a,q\in\mathbb R^d\), we have 
\[ 
\overline m(t,a,q) = t\overline L\left(\frac{q-a}{t}\right). 
\] 
\end{prop}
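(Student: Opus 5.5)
By Lemma~\ref{lem: mbarproperty}, it suffices to prove that the function
\[
\Phi(v):=\overline m(1,0,v),\qquad v\in\mathbb R^d,
\]
coincides with $\overline L$. Indeed, homogeneity and translation invariance then give $\overline m(t,a,q)=t\,\Phi((q-a)/t)$. We plan to show that $\Phi$ is convex and finite, and that its Legendre transform equals $\overline H$; the claim then follows from Fenchel--Moreau.

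\emph{Convexity of $\Phi$.} For $v_1,v_2$ and $\theta\in(0,1)$ with $\theta$ rational, the general almost-subadditivity \eqref{eq: general subadd_2} lets us concatenate a near minimizer for $m(\rho\theta,0,\rho\theta v_1)$ with one for $m(\rho(1-\theta),\rho\theta v_1,\rho\theta v_1+\rho(1-\theta)v_2)$, at cost $C_R$. Dividing by $\rho$ and using Lemma~\ref{lem: mbarproperty} gives
\[
\Phi(\theta v_1+(1-\theta)v_2)\le\theta\Phi(v_1)+(1-\theta)\Phi(v_2).
\]
The quadratic bounds give $\tfrac12|v|^2-K_0\le\Phi(v)\le\tfrac12|v|^2+K_0$. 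Here the lower bound uses $\|\gamma(t)-\gamma(0)\|\ge|\mathfrak m(\gamma(t))-\mathfrak m(\gamma(0))|$ and Jensen's inequality. So $\Phi$ is locally bounded and convex, hence continuous, and convexity extends to all $\theta$.

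\emph{Inequality $\Phi^*\le\overline H$, i.e. $\Phi\ge\overline L$.} Use the cell problem: for $p\in\mathbb R^d$ there is a (Lipschitz, viscosity) corrector $v$ with $H(y,p\chi_I+Dv(y))=\overline H(p)$. Since $v$ is a subsolution and $H$ is convex, the function $w(y,s)=\langle p\chi_I,y\rangle+v(y)-\overline H(p)s$ satisfies $w_s+H(y,Dw)\le0$. By the standard calibration (sub-optimality of the value function, which one may justify via the optimal-control representation \eqref{eq:optimal control_ue} and comparison), for any admissible curve $\gamma$ on $[0,T]$ we get
\[
\int_0^T L\,ds\ge p\cdot(\mathfrak m(\gamma(T))-\mathfrak m(\gamma(0)))+v(\gamma(T))-v(\gamma(0))-T\overline H(p).
\]
Since $v$ is $\Lambda$-periodic, $\mathcal G$-invariant and Lipschitz, it is bounded: it descends to the compact quotient $SS^d$. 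Therefore
\[
m(T,0,Tq)\ge T(p\cdot q-\overline H(p))-2\|v\|_\infty .
\]
Dividing by $T$ and letting $T\to\infty$ gives $\Phi(q)\ge p\cdot q-\overline H(p)$ for all $p$, so $\Phi\ge\overline L$.

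\emph{Reverse inequality $\Phi\le\overline L$, i.e. $\Phi^*\ge\overline H$.} This is the main obstacle. We plan to avoid constructing calibrated curves and instead use qualitative homogenization. Take linear initial data $u_0(x)=p\cdot\mathfrak m(x)$, suitably truncated to lie in $C^1_b$, or handled via a local argument since everything is Lipschitz. Then $\widetilde u(q,1)=p\cdot q-\overline H(p)$. On the other hand, \eqref{eq:Ue representation} gives
\[
U^\varepsilon(q,1)=\inf_a\{p\cdot a+\varepsilon m(1/\varepsilon,a/\varepsilon,q/\varepsilon)\}.
\]
By Proposition~\ref{prop: existance mbar} and Lemma~\ref{lem: mbarproperty}, this converges to $\inf_w\{p\cdot(q-w)+\Phi(w)\}=p\cdot q-\Phi^*(p)$. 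The quadratic lower bound confines the infimum to $|q-a|\le R$, which justifies using the estimates uniformly. By the convergence $u^\varepsilon\to u$ recalled from \cite{park2026} together with \eqref{eq: mean reduction estimate}, we conclude $\Phi^*(p)=\overline H(p)$. This in fact gives both inequalities at once, making the calibration step a consistency check.

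The delicate points are the truncation of linear data, which is handled by finite speed of propagation from the quadratic bounds and the Lipschitz estimate of Lemma~\ref{lemma:equi-Lip}, and the uniformity of \eqref{eq: estimate m and mbar} over the minimizing $a$. Having $\Phi^*=\overline H$ with $\Phi$ convex and continuous, Fenchel--Moreau yields $\Phi=\overline H^*=\overline L$.
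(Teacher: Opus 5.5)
Your proof is correct and follows the paper's overall plan. You prove quadratic bounds for $\Phi(q)=\overline m(1,0,q)$, get convexity from \eqref{eq: general subadd_2}, identify $\Phi^*=\overline H$, and finish with Fenchel--Moreau. The rational-$\theta$ detour is unnecessary, since \eqref{eq: general subadd_2} applies to every real $\theta\in(0,1)$ once $\rho$ is large.

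The real difference is where $\Phi^*=\overline H$ comes from. The paper fixes $p$ and solves the tilted equation $w_t+H(x,p\chi_I+Dw)=0$ with zero data. It then computes $\lim_{t\to\infty}t^{-1}W_p(0,t)$ in two ways. Through the metric the limit is $-\Phi^*(p)$. Through the bounded corrector $v_p$ and the comparison principle it is $-\overline H(p)$, with explicit error $2\|v_p\|_\infty/t$. You instead feed linear data into the qualitative homogenization theorem of \cite{park2026}.

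These are the same identity seen at two scales. For exactly linear data $u_0=p\cdot\mathfrak m$ one has $u^\e(x,t)=p\cdot\mathfrak m(x)+\e\,w(x/\e,t/\e)$. Hence $U^\e(0,1)=\e\,W_p(0,1/\e)$, and your limit $\e\to0$ is exactly the paper's limit $t\to\infty$.

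The paper's route is more self-contained and quantitative: it needs only a bounded corrector and comparison for bounded data. Yours uses a heavier black box, which was itself proved via correctors, and therefore needs the truncation you flag. That step does go through. For truncated data with Lipschitz constant about $|p|$, the confinement argument of Section~\ref{section5} restricts the infimum in \eqref{eq:Ue representation} to $|q-a|\le R$, with $R$ independent of the truncation radius. On the effective side, finite speed of propagation for the finite-dimensional equation gives $\widetilde u(0,1)=-\overline H(p)$.

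Neither step uses the Hopf--Lax formula or convexity of $\overline H$. This matters, because the paper derives both from this very proposition. Finally, your calibration bound is just the lower half of the paper's estimate $|W_p(0,t)+t\overline H(p)|\le 2\|v_p\|_\infty$, so it is redundant, as you note.
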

\begin{proof}
We consider the Legendre transform of the homogenized mean metric. Define
\[
    \overline H_m(p)
    :=
    \sup_{q\in\mathbb R^d}
    \left\{
    p\cdot q- \overline m(1,0,q)
    \right\},
    \qquad p\in\mathbb R^d.
\]
We will prove that
\[
    \overline H_m(p)=\overline H(p)
\]
for every $p \in \R^d$ and the function  
\[
    q\mapsto \overline m(1,0,q), \qquad q \in \R^d
\]
is convex and lower semicontinuous. Once these facts are established, the conclusion follows from the Fenchel--Moreau theorem. More precisely,
\[
    \overline m(1,0,q)
    =
    \overline H_m^*(q)
    =
    \overline H^*(q)
    =
    \overline L(q), \qquad \text{for every } q \in \R^d.
\]
Using Lemma~\ref{lem: mbarproperty}, we obtain
\[
    \overline m(t,a,q)
    =
    t\overline m\left(1,0,\frac{q-a}{t}\right)
    =
    t\overline L\left(\frac{q-a}{t}\right)
\]
for every $t>0$ and $a,q \in\R^d$. 

\noindent\textbf{Step 1.} Fix \(p\in\mathbb R^d\). Consider the tilted Hamilton--Jacobi equation
\begin{equation}\label{eq: tilted HJ}
\begin{cases}
    w_t+H(x,p\chi_I+Dw)=0, \qquad &\text{in } V\times (0,\infty)\\
    w(x,0)=0, \qquad &\text{on } V.
\end{cases}
\end{equation}
Equivalently, set
\[
    H_p(x,r):=H(x,p\chi_I+r),
    \qquad r\in V.
\]
The Lagrangian associated with \(H_p\) is
\[
\begin{aligned}
    L_p(x,\xi)
    = \sup_{r\in V}
    \left\{\langle r,\xi\rangle_{L^2}
    - H(x,p\chi_I+r)\right\} 
    = L(x,\xi)-\langle p\chi_I,\xi\rangle_{L^2}.
\end{aligned}
\]
Therefore, by the optimal-control representation,
\begin{align*}
    w(x,t)
    &=
    \inf_{\substack{\gamma\in AC([0,t];V)\\ \gamma(t)=x}}
    \int_0^t
    L(\gamma(s),\dot\gamma(s))
    -
    \langle p\chi_I, \dot\gamma(s) \rangle_{L^2}
    \,ds \\
    &=
    \inf_{\substack{\gamma\in AC([0,t];V)\\ \gamma(t)=x}}
    \left\{
    \int_0^t L(\gamma(s),\dot\gamma(s))\,ds
    -
    p \cdot \left(\mathfrak m(x) - \mathfrak m(\gamma(0))\right)
    \right\}.
\end{align*}

For \(q\in\mathbb R^d\), define the mean-restricted value function
\[
    W_p(q,t)
    :=
    \inf_{\mathfrak m(x)=q} w(x,t).
\]
Grouping admissible curves according to their initial mean \(a \chi_I\), we get
\[
    W_p(q,t)
    =
    \inf_{a\in\mathbb R^d}
    \left\{
    m(t,a,q) - p\cdot(q - a)
    \right\}.
\]
In particular, for \(q=0\),
\[
    W_p(0,t)
    =
    \inf_{a\in\mathbb R^d}
    \left\{
    m(t,a,0)+p\cdot a
    \right\}.
\]
Writing \(a=-tv\), we obtain
\begin{equation}\label{eq: s6_Wp}
    \frac1t W_p(0,t)
    =
    \inf_{v\in\mathbb R^d}
    \left\{
    \frac1t m(t,-tv,0)-p\cdot v
    \right\}.
\end{equation}

\noindent\textbf{Step 2.}
We claim that
\begin{equation}\label{eq: prop6.1claim}
    \lim_{t\to\infty}\frac1t W_p(0,t)
    =
    -\overline{H}_m(p).
\end{equation}
Indeed, for $v$ in a bounded subset of $\R^d$, by Proposition~\ref{prop: existance mbar},
\[
    m(t,-tv,0)
    =
    \overline{m}(t,-tv,0) + O(1)
\]
uniformly in time. Then, by Lemma~\ref{lem: mbarproperty},
\begin{equation}\label{eq: s6_bd}
    \frac1t m(t,-tv,0) = \frac1t \overline m(t,-tv,0) + O\left(\frac1t\right) = \overline m (1,0,v) + O\left(\frac1t\right).
\end{equation}

It remains to show that the infimum in \eqref{eq: s6_Wp} may be restricted to a fixed bounded set of \(v\)'s. For every admissible curve $\gam$ for $m(t,-tv,0)$, Jensen's inequality gives
\[
    \int_0^t \|\dot\gamma(s)\|_{L^2}^2\,ds
    \ge
    t|v|^2.
\]
Therefore,
\[
    \frac1t m(t,-tv,0)-p\cdot v
    \ge
    \frac12|v|^2-p\cdot v-K_0.
\]
The right-hand side tends to \(+\infty\) as \(|v|\to\infty\), uniformly in \(t\). On the other hand, taking $v=0$ and using the constant path at $0\chi_I$ gives
\[
    \frac1t m(t,0,0)\leq K_0
\]
uniformly in $t$. Thus the infima are confined to a ball depending only on \(p\). We may therefore pass to the limit through the infimum and use \eqref{eq: s6_bd} to obtain
\[
\begin{aligned}
    \lim_{t\to\infty}\frac1t W_p(0,t)
    =
    \inf_{v\in\mathbb R^d}
    \left\{\overline{m}(1,0,v)-p\cdot v \right\} = -\overline{H}_m(p).
\end{aligned}
\]
This proves the claim \eqref{eq: prop6.1claim}.

\noindent\textbf{Step 3.}
We now compute the same large-time limit using the effective Hamiltonian. For each \(p\in\R^d\), the cell problem
\[
    H(x,p\chi_I+Dv_p(x))=\overline{H}(p), \qquad \text{in }V
\]
admits a bounded corrector \(v_p\). See Proposition~3.2 in~\cite{park2026}. Then
\[
    v_p(x)-\overline{H}(p)t
\]
solves the tilted equation \eqref{eq: tilted HJ} with initial data \(v_p\). The comparison principle gives
\[
    \left|
    w(x,t)+t\overline{H}(p)
    \right|
    \le
    2 \|v_p\|_{L^\infty}
\]
for all \(x\in V\) and \(t>0\). This also implies
\[
    \left|
    W_p(0,t)+t\overline{H}(p)
    \right|
    \le
    2 \|v_p\|_{L^\infty}.
\]
Dividing by \(t\) and sending \(t\to\infty\), we get
\begin{equation}\label{eq: s6_largetimelimit}
    \lim_{t\to\infty}\frac1t W_p(0,t)
    =
    -\overline{H}(p).
\end{equation}

Combining the two expressions \eqref{eq: prop6.1claim} and \eqref{eq: s6_largetimelimit}  gives
\[
    -\overline{H}_m(p)
    =
    -\overline{H}(p).
\]
Hence
\[
    \overline{H}_m(p)=\overline{H}(p)
    \qquad\text{for every }p\in\mathbb R^d.
\]

\noindent\textbf{Step 4.}
It remains only to justify the convex duality step. Passing to the limit $\rho\to\infty$ in the quadratic bounds on \(m\)
\[
\frac12|q|^2-K_0 \leq \frac1\rho m(\rho,0,\rho q) \leq \frac12|q|^2+K_0, \qquad q \in \R^d
\]
gives
\[
\frac12|q|^2-K_0 \leq \overline m(1,0,q) \leq \frac12|q|^2+K_0, \qquad q \in \R^d.
\]

In particular, \(q\mapsto \overline{m}(1,0,q)\) is finite on \(\mathbb R^d\).

We next show that this function is convex. Let \(q_1,q_2\in\mathbb R^d\) and
\(\theta\in(0,1)\). Set
\[
    q_\theta:=\theta q_1+(1-\theta)q_2.
\]
For sufficiently large $t>0$, the general almost-subadditivity estimate~\eqref{eq: general subadd_2} gives
\[
    m(t,0,tq_\theta)
    \le
    m(\theta t,0,\theta tq_1)
    +
    m((1-\theta)t,\theta tq_1,tq_\theta)
    +
    C.
\]
Therefore, after passing to the homogenized metric and using Lemma~\ref{lem: mbarproperty}, we get
\[
\begin{aligned}
    \overline{m}(1,0,q_\theta)
    &\le
    \theta \overline{m}(1,0,q_1)
    +
    (1-\theta)\overline{m}(1,0,q_2).
\end{aligned}
\]
Thus \(q\mapsto \overline{m}(1,0,q)\) is convex. Since it is finite and convex on
\(\mathbb R^d\), it is continuous, and hence lower semicontinuous. This completes the proof.
\end{proof}

\section{Conclusion: the optimal convergence rate}\label{section5} 
\begin{proof}[Proof of Theorem~1.1]
Recall that for every $q \in \R^d$, we define the mean-restricted value function:
\begin{equation}\label{eq: s7_1}
    U^\varepsilon(q,t)
    :=
    \inf_{\mathfrak m(x) = q} u^\varepsilon(x,t) = \inf_{a\in\mathbb R^d}
    \left\{
    \widetilde u_0(a)
    +
    \varepsilon
    m\left(
    \frac{t}{\varepsilon},
    \frac{a}{\varepsilon},
    \frac{q}{\varepsilon}
    \right)
    \right\}.
\end{equation}
Let
\[
    K:=\|Du_0\|_{L^\infty(V)}.
\]
We first show that the infima in~\eqref{eq: s7_1} are restricted to a bounded ball.
By the quadratic lower bound on \(L\) and Jensen's inequality,
\[
    \varepsilon
    m\left(
        \frac{t}{\varepsilon},
        \frac{a}{\varepsilon},
        \frac{q}{\varepsilon}
    \right)
    \ge
    \frac{|q-a|^2}{2t}-K_0t
\]
for every $t>0$, and $a,q \in \R^d$. Moreover,
\[
    \widetilde u_0(a)\ge \widetilde u_0(q)-K|q-a|, \qquad a,q\in\R^d.
\]
On the other hand, taking \(a=q\) and using the constant path gives
\[
    U^\varepsilon(q,t)
    \le
    \widetilde u_0(q)+K_0t.
\]
for every $t>0$ and $q \in\R^d$. Therefore, choosing \(R>0\) sufficiently large, depending only on \(K\) and \(K_0\), we may restrict the infimum in~\eqref{eq: s7_1} to the set
\begin{equation}\label{eq: s7_2}
    |q-a|\le Rt.
\end{equation}
The same argument applies
to the effective variational formula:
\begin{equation}\label{eq: s7_3}
\widetilde u(q,t) = \inf_{a\in\mathbb R^d} \left\{\widetilde u_0(a)+\overline{m}(t,a,q) \right\}.
\end{equation}
After increasing $R$, if needed, the infimum in~\eqref{eq: s7_3} is restricted to~\eqref{eq: s7_2} with $R$ depending only on $K$ and $K_0$.

Consequently, Lemma~\ref{lem: bddestimate_mbar} and Proposition~\ref{prop: mbar identification} imply
\[
\begin{aligned}
    \left|U^\varepsilon(q,t)-\widetilde u(q,t)\right|
    \le
    \sup_{|q-a|\le Rt}
    \left|
    \varepsilon
    m\left(
        \frac{t}{\varepsilon},
        \frac{a}{\varepsilon},
        \frac{q}{\varepsilon}
    \right)
    -
    \overline m(t,a,q)
    \right|  \le C\varepsilon
\end{aligned}
\]
for every \(q\in\mathbb R^d\) and \(t>0\).

Finally, for every $x \in V$,
\[
\begin{aligned}
    |u^\varepsilon(x,t)-\widetilde u(\mathfrak m(x),t)|
    &\le
    |u^\varepsilon(x,t)-U^\varepsilon(\mathfrak m(x),t)|+
    |U^\varepsilon(\mathfrak m(x),t)-\widetilde u(\mathfrak m(x),t)| \le C\varepsilon
\end{aligned}
\]
for every $t > 0$. Hence
\[
    \|u^\varepsilon (x,t)-\widetilde u(\mathfrak m(x),t)\|_{L^\infty(V\times[0,\infty))}
    \le C\varepsilon.
\]
At $t=0$, the estimate holds trivially because $u_0(x)=\widetilde u_0(\mathfrak m(x))$.
\end{proof}
This proves the $O(\e)$ convergence rate in the convex infinite-dimensional setting.

\subsection{Optimality of the convergence rate}

We conclude by showing that the convergence rate \(O(\varepsilon)\) is optimal in general. The example below is an infinite-dimensional analogue of the one-dimensional example in \cite{Tran-Yu-optimal}.

\begin{ex}[Optimality of the \(O(\varepsilon)\) rate]
\label{ex:optimality}
Let \(d=1\), \(I=[0,1]\), and \(V=L^2(I;\mathbb R)\). Define the
\(1\)-periodic potential
\[
    W(r):=-4\cos^2(\pi r),
    \qquad r\in\mathbb R,
\]
and set
\[
    \mathcal W(x):=\int_I W(x(i))\,d\lambda_0(i),
    \qquad x\in V.
\]
Consider the Hamiltonian
\[
    H(x,p)
    :=
    \frac12\|p\|_{L^2}^2+\mathcal W(x),
    \qquad (x,p)\in V\times V.
\]
For every $0<\e<1$, let $u^\e$ be the viscosity solution to $\mathrm{(CP)}_\e$ with initial data $u_0\equiv0$. 
Then the homogenized solution \(\widetilde u\equiv0\) solves the effective equation $\mathrm{(\overline{CP})}$. Moreover, 
\begin{equation}\label{eq: s7_4}
    u^\varepsilon(0\chi_I,1)\ge \frac{\varepsilon}{6}.
\end{equation}
Consequently,
\[
    \|u^\varepsilon(x,t)-\widetilde u(\mathfrak m(x),t)\|_{L^\infty(V\times[0,\infty))}
    \ge \frac{\varepsilon}{6},
\]
and therefore the convergence rate \(O(\varepsilon)\) is optimal.
\end{ex}

\begin{proof}
Since \(W\) is smooth and \(1\)-periodic, the functional
\(\mathcal W\) is Lipschitz on \(V\). Moreover,
\[
    \mathcal W(x+z)=\mathcal W(x)
    \qquad\text{for every }z\in\Lambda,
\]
and
\[
    \mathcal W(x\circ g)=\mathcal W(x), \qquad \text{for every } g \in \mathcal{G}.
\]
Hence \(H\) is periodic and rearrangement invariant. It is also locally Lipschitz, coercive, and convex in the momentum variable. Thus \rm{(H1)--(H5)} hold. The initial datum \(u_0\equiv0\) clearly satisfies \rm{(I1)--(I2)}.

The corresponding Lagrangian is
\begin{equation*}
    L(x,v)
    =
    \frac12\|v\|_{L^2}^2-\mathcal W(x).
\end{equation*}
Since \(W\le0\), we have \(L\ge0\).

We first identify the homogenized solution. Let \(r_*=1/2\). Since $W(r_*)=0$, for all sufficiently large \(t\), we may construct a curve that moves from \(0\chi_I\) to \(r_*\chi_I\) in unit time, remains at \(r_*\chi_I\) until time \(t-1\), and then returns to \(0\chi_I\) in unit time. The action of this curve is bounded independently of \(t\), because $L(r_*\chi_I,0)=0$. Therefore,
\[
    0\le m(t,0,0) \le C
\]
for all sufficiently large \(t\), and hence
\begin{equation*}
    \overline L(0)
    =
    \overline m(1,0,0)
    =
    \lim_{t\to\infty}\frac1t m(t,0,0)
    =
    0.
\end{equation*}

Since $\overline{H}$ is even and convex (see Remark~\ref{rem2}), it attains its minimum at \(p=0\). By Legendre duality,
\[
    0 = -\overline L(0)
    =
    -\sup_{p\in\mathbb R}\{-\overline H(p)\}
    =
    \min_{p\in\mathbb R}\overline H(p) = \overline{H}(0).
\]
It follows that
\[
    \overline H(0)=0.
\]
Therefore \(\widetilde u\equiv0\) is the solution of $\mathrm{(\overline{CP})}$ with initial
data \(u_0\equiv0\).

It remains to prove \eqref{eq: s7_4}. By time reversal, using $L(x,v)=L(x,-v)$ for $x,v \in V$, the optimal-control representation becomes
\begin{equation}\label{eq: s7_5}
    u^\varepsilon(0\chi_I,1)
    =
    \inf_{\substack{
        \gamma\in AC([0,\varepsilon^{-1}];V)\\
        \gamma(0)=0\chi_I
    }}
    \varepsilon
    \int_0^{\varepsilon^{-1}}
    \left(
        \frac12\|\dot\gamma(s)\|_{L^2}^2
        -
        \mathcal W(\gamma(s))
    \right)\,ds .
\end{equation}

Fix an admissible curve \(\gamma\) and choose the standard representative described in Remark~\ref{rem3}. Then, for almost every \(i\in I\), the scalar path
\[
    \gamma_i(s):=\gamma(s)(i)
\]
is absolutely continuous and satisfies \(\gamma_i(0)=0\). By Tonelli's theorem, the action in \eqref{eq: s7_5} is
\[
    \varepsilon
    \int_I
    \int_0^{\varepsilon^{-1}}
    \left(
        \frac12|\dot\gamma_i(s)|^2
        -
        W(\gamma_i(s))
    \right)\,ds\,d\lambda_0(i).
\]

We claim that, for almost every \(i\in I\),
\begin{equation}\label{eq: s7_6}
    L^\gam_i := \int_0^{\varepsilon^{-1}}
    \left(
        \frac12|\dot\gamma_i(s)|^2
        -
        W(\gamma_i(s))
    \right)\,ds
    \ge \frac16.
\end{equation}
Indeed, recall that
\[
    W(r)\le-1
    \qquad\text{for } |r|\le\frac13.
\]

There are two cases. If
\[
    \gamma_i\left([0,\tfrac13]\right)
    \subset
    \left[-\tfrac13,\tfrac13\right],
\]
then
\begin{equation*}
     L^\gam_i
    \ge
    \int_0^{1/3}-W(\gamma_i(s))\,ds \ge \frac13.
\end{equation*}
Otherwise, by continuity there exists \(\tau_i\in(0,\frac13]\) such that $|\gamma_i(\tau_i)|=1/3$. Since \(-W\ge0\), the Cauchy--Schwarz inequality gives
\begin{equation*}
     L^\gam_i \ge
    \frac12\int_0^{\tau_i}|\dot\gamma_i(s)|^2\,ds \ge
    \frac{1}{2\tau_i}
    \left|
        \int_0^{\tau_i}\dot\gamma_i(s)\,ds
    \right|^2 \ge
    \frac16.
\end{equation*}
Thus \eqref{eq: s7_6} holds in either case.

Since this holds for every admissible curve \(\gamma\), integrating \eqref{eq: s7_6} over \(I\) and taking the infimum in \eqref{eq: s7_5} yields
\[
    u^\varepsilon(0\chi_I,1)\ge\frac{\varepsilon}{6}.
\]
Because \(u\equiv0\), the conclusion follows.
\end{proof}
\begin{rem}
\label{rem2}
For the Hamiltonian in Example~\ref{ex:optimality}, the effective Hamiltonian $\overline H$ is even. 
Since $L(x,v)=L(x,-v)$ for $x,v \in V$, time reversal gives
\[
    m(t,a,q)=m(t,q,a).
\]
Therefore, by Lemma~\ref{lem: mbarproperty},
\[
\overline L(q)
=\overline m(1,0,q)
=\overline m(1,q,0)
=\overline m(1,0,-q)
=\overline L(-q).
\]
Thus $\overline L$ is even, and hence so is its Legendre transform $\overline H$.

Moreover, the convexity of $\overline{H}$ follows from the metric identification established in Proposition~\ref{prop: mbar identification}.
\end{rem}

\begin{rem}\label{rem3}
Let
\[
    \gamma\in AC([0,T];V).
\]
By the fundamental theorem of calculus for absolutely continuous functions taking values in a Hilbert space, there exists
\[
    \dot\gamma\in L^1([0,T];V)
\]
such that
\[
    \gamma(t)=\gamma(0)+\int_0^t\dot\gamma(s)\,ds
    \qquad\text{in }V
\]
for every $t\in[0,T]$.

Choose a jointly measurable representative
\[
    \dot\Gamma:[0,T]\times I\to\mathbb R^d
\]
of $\dot\gamma$, and choose a measurable representative
$\Gamma_0:I\to\mathbb R^d$ of $\gamma(0)$. Define
\[
    \Gamma(t,i):=
    \Gamma_0(i)+\int_0^t\dot\Gamma(s,i)\,ds.
\]
Since $\lambda_0(I)=1$,
\[
\begin{aligned}
    \int_0^T\int_I|\dot\Gamma(s,i)|\,d\lambda_0(i)\,ds \le
    \int_0^T
    \|\dot\Gamma(s,\cdot)\|_{L^2(I)}\,ds <\infty.
\end{aligned}
\]
Fubini's theorem therefore implies that $s\mapsto\dot\Gamma(s,i)$ is integrable for almost every $i\in I$. Consequently, for almost every $i$, the path
\[
    \gamma_i(t):=\Gamma(t,i)
\]
is absolutely continuous and satisfies
\[
    \dot\gamma_i(t)=\dot\Gamma(t,i).
\]
Moreover, $\Gamma(t,\cdot)$ represents $\gamma(t)$ for every $t\in[0,T]$. If $\gamma(0)=0$, we may choose $\Gamma_0=0$, and hence $\gamma_i(0)=0$ for almost every $i$.

For the Lagrangian in Example~1, the function
\[
    (s,i)\longmapsto
    \frac12|\dot\Gamma(s,i)|^2-W(\Gamma(s,i))
\]
is jointly measurable and nonnegative because $W\le0$. Tonelli's theorem therefore gives
\[
\begin{aligned}
    &\int_0^T
    \left(
        \frac12\|\dot\gamma(s)\|_{L^2}^2
        -\mathcal W(\gamma(s))
    \right)\,ds =
    \int_I\int_0^T
    \left(
        \frac12|\dot\gamma_i(s)|^2
        -W(\gamma_i(s))
    \right)\,ds\,d\lambda_0(i)
\end{aligned}
\]
where the two sides are allowed a priori to take the value \(+\infty\). For a curve with infinite action, the lower bound required in Example~1 is automatic. Hence, it is enough to consider finite-action curves.
\end{rem}

\bibliographystyle{naturemag}
\bibliography{references}

\end{document}